\theoremstyle{plain}
\newtheorem{lemma}{Lemma}[section]
\newtheorem{theorem}[lemma]{Theorem}
\newtheorem{corollary}[lemma]{Corollary}
\newtheorem{conjecture}[lemma]{Conjecture}
\newtheorem{proposition}[lemma]{Proposition}
\newtheorem{definition}[lemma]{Definition}
\newtheorem{remark}{Remark}[section]
\newtheorem{example}[lemma]{Example}
\newtheorem*{Acknowlegement}{Acknowlegement}
\newenvironment{proof*}{\vskip 2mm\noindent {}}{\hfill $\Box$ \vskip 2mm}
\begin{document}

\title[On limit Brody curves in $\mathbb C^n$ and $(\mathbb C^*)^2$]{On limit Brody curves in $\mathbb C^n$ and $(\mathbb C^*)^2$}

\thanks{The research of the authors is supported by an NAFOSTED grant of Vietnam.}

\author[]{*Do Duc Thai, *Mai Anh Duc \and **Ninh Van Thu}

\thanks{The research of the third author was supported in part by an NRF grant
2011-0030044 (SRC-GAIA) of the Ministry of Education, The Republic of Korea.}

\address{*Department of Mathematics \newline
Hanoi National University of Education\newline
136 Xuan Thuy str., Hanoi, Vietnam}

\email{ ducthai.do@gmail.com, ducphuongma@gmail.com}
\address{**Department of Mathematics and Center for Geometry and its Applications,
Pohang University of Science and Technology,  Pohang 790-784, Republic of Korea}
\email{thunv@postech.ac.kr, thunv@vnu.edu.vn}

\subjclass[2000]{Primary 32F45; Secondary 32M05, 32H25, 32Q45.}
\keywords{Normal family of holomorphic mappings, $E$-Brody curve, $E$-Brody space.}

\begin{abstract} 
In this paper, the conjecture on the Zalcmanness of $\mathbb C^n \ (n\geq 2)$ and $(\mathbb C^*)^2$, which is posed in  \cite{Do}, is proved in the case where  the derivatives of limit holomorphic curves are bounded. Moreover, several criteria for normality of families of holomorphic mappings are given.
\end{abstract}

\maketitle

\section{Introduction}
The main aim of this paper is to study the conjecture on the Zalcmanness of $\mathbb C^n \ (n\geq 2)$, which is posed in  \cite{Do}. 
We now recall the above conjecture.

\begin{definition}\label{Def20} (see \cite[Def. 2.9]{Do}) Let $X$ be a complex space. The complex space $X$ is said to be a Zalcman complex spsace if $X$ satisfies the following:

For each non-normal family $\mathcal{F}\subset \mathrm{Hol}(\Delta, X)$ such that $\mathcal{F}$ is not compactly divergent, then there exist sequences $\{p_j\}\subset \Delta$ with $p_j\to p_0\in\Delta$ as $j\to\infty$, $\{f_j\} \subset \mathcal{F}, \{\rho_j\}\subset \mathbb R$ with $\rho_j>0$ and $\rho_j \to 0^+$ as $j\to \infty$ such that 
$$
g_j(\xi):=f_j(p_j+\rho_j\xi), \xi\in \mathbb  C,
$$
converges uniformly on any compact subsets of $\mathbb C$ to a non-constant holomorphic curve $g: \mathbb C\to X$.
\end{definition}

Remark that there are numerous examples of Zalcman spaces such as compact complex spaces and the complement of any hyperbolic hypersurface in a compact complex space (see \cite{Do} and \cite{TT}).

\begin{conjecture}\label{Con} (see \cite[Remark 2.14]{Do}) $\mathbb C^n$  is a Zalcman space for each $n \geq 2.$
\end{conjecture}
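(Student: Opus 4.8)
The plan is to run the Zalcman--Brody rescaling machinery directly on the family $\mathcal F\subset \mathrm{Hol}(\Delta,\mathbb C^n)$, equipping the target $\mathbb C^n$ with its Euclidean Hermitian metric, and to produce the rescaled limit curve $g$ required by Definition \ref{Def20}. The first step is to convert non-normality into a derivative blow-up. If the derivatives $\{f'\}_{f\in\mathcal F}$ were locally uniformly bounded on $\Delta$, then $\mathcal F$ would be locally equi-Lipschitz, and an Arzel\`a--Ascoli argument would force every sequence either to converge locally uniformly (at points where the values stay bounded) or to be compactly divergent; hence $\mathcal F$ would be normal. Since $\mathcal F$ is non-normal, the normalized derivatives must blow up: I would extract $f_j\in\mathcal F$ and points $z_j\to p_0\in\Delta$ with $\|f_j'(z_j)\|=:1/\rho_j\to\infty$, selected by Brody's maximum-choice argument so that $\|f_j'\|$ does not grow too fast on the $\rho_j$-scale around $z_j$.

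Next I would set $p_j:=z_j$ and define the rescalings $g_j(\xi):=f_j(p_j+\rho_j\xi)$, so that $\|g_j'(0)\|=1$ and, by Brody's reparametrization bound, $\|g_j'(\xi)\|\le 1+o(1)$ on each fixed disk $\{|\xi|\le R\}$ as $j\to\infty$. The goal is to pass to a locally uniformly convergent subsequence $g_j\to g:\mathbb C\to\mathbb C^n$. The uniform derivative bound controls oscillation, so by Arzel\`a--Ascoli such convergence follows \emph{provided} the base points $g_j(0)=f_j(p_j)$ remain in a compact subset of $\mathbb C^n$. Granting this, the limit $g$ is entire and satisfies $\|g'(0)\|=1$, so it is non-constant, and the pair $(\{p_j\},\{\rho_j\})$ together with $g$ realizes the Zalcman conclusion; this would establish that $\mathbb C^n$ is a Zalcman space and prove Conjecture \ref{Con}.

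The hard part, and the precise reason the statement is only a conjecture, is the escape problem in the non-compact target $\mathbb C^n$ when $n\ge 2$: there is no a priori reason that the blow-up locus $\{p_j\}$ is also a locus on which the values $\{f_j(p_j)\}$ stay bounded. The hypothesis that $\mathcal F$ is not compactly divergent only supplies \emph{some} sequence $w_j$ in a compact $K\subset\Delta$ with $\{f_j(w_j)\}$ relatively compact, and one must transport this boundedness to the points $p_j$. The natural attempt is to bound $\|f_j(p_j)-f_j(w_j)\|$ by the Euclidean length of $f_j$ along a path from $w_j$ to $p_j$, i.e.\ by $\int\|f_j'\|\,ds$; but since $\|f_j'\|$ blows up like $1/\rho_j$ while $w_j$ need not lie within distance $O(\rho_j)$ of $p_j$, this length can diverge and the rescaled curve can escape to infinity. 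It is exactly here that a uniform bound on the derivatives of the limit holomorphic curves intervenes: it controls the length accumulated between the two loci, forces $\{f_j(p_j)\}$ to stay compact, and lets the extraction of $g$ go through. Removing this hypothesis --- i.e.\ ruling out escape for arbitrary non-normal, non-compactly-divergent families by coordinating the blow-up locus with a locus of bounded values --- is the genuine obstacle, and is what keeps the unconditional conjecture open.
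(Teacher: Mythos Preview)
The statement is a \emph{conjecture}, and the paper does not prove it; both the introduction and the remark following Corollary~\ref{Cor2} say explicitly that Conjecture~\ref{Con} remains open. So there is no proof in the paper to compare your attempt against. Your proposal is not a proof either, and you yourself recognize this: the escape problem you isolate in the last paragraph --- no a priori control on $\{f_j(p_j)\}$ in the non-compact target $\mathbb C^n$ --- is a genuine, unresolved gap.

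What does need correcting is your reading of what the bounded-derivative hypothesis buys. You suggest that a uniform bound on the derivative of the \emph{limit} curve would control the Euclidean length of $f_j$ between $w_j$ and $p_j$ and keep $\{f_j(p_j)\}$ compact. That cannot work: a bound on $|g'|$ gives no information about $\|f_j'\|$ outside the $\rho_j$-scale neighbourhood of $p_j$, which is all the Brody reparametrization controls. More to the point, the paper's actual result runs in the \emph{opposite} direction. Theorem~\ref{Prop 4.3} exhibits a concrete non-normal, non-compactly-divergent family $f_m(z)=(mz,g(mz))$ in $\mathrm{Hol}(\Delta,\mathbb C^n)$ and shows that \emph{every} possible Zalcman rescaling limit $\varphi$ of this family has $E(\varphi,\varphi')$ unbounded. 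Hence $\mathbb C^n$ is \emph{not} of $E$-limit type for any length function $E$: demanding that the limit be an $E$-Brody curve makes the Zalcman-type conclusion fail, not succeed. The upshot is that the standard Brody--Zalcman rescaling machinery cannot by itself settle Conjecture~\ref{Con}.
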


As far as we know, Conjecture \ref{Con} is still open. In this paper, we show that this conjecture holds if the derivatives of the holomorphic curves $g: \mathbb C\to X$ in Definition \ref{Def20} are bounded.  To state our main result, we give some definitions.

\begin{definition}\label{Def3}(see \cite[pp. 8-10]{Lang}) A length function on a complex space $X$ is a real-valued non-negative continuous function $E$ defined on the tangent bundle $TX$ satisfying
$$
E(v)=0~\text{iff}~v=0, 
$$
and
$$
E(av)=|a|\cdot E(v)~\text{for}~a\in \mathbb C~\text{and}~v\in TX.
$$
\end{definition}

\begin{definition}\label{Def1}
Let $X$ be a hermitian complex space with a length function $E$. A holomorphic curve $f: \mathbb C\to X$ is said to be an $E$-Brody curve if its derivative is bounded, i.e.,  $|f'(z)|_E\lesssim 1$ on $\mathbb C$. Specifically, if $X$ is a domain in $\mathbb P^n(\mathbb C)$, we understand that a Brody curve in $X$ is a $ds^2_{FS}$-Brody curve, where $ds^2_{FS}$ is the Fubini-Study metric on $\mathbb P^n(\mathbb C).$
\end{definition}

For results concerning Brody curves, we refer the reader to the monographs \cite{B-E}, \cite{C-D}, \cite{Duval}, \cite{E1}, \cite{Tsu}, \cite{Win},
\cite{Win1}.  

\begin{definition}\label{Def2} Let $X$ be a complex space with a length function $E$. The complex space $X$ is said to be of $E$-limit type 
 if $X$ satisfies the following:

For each non-normal family $\mathcal{F}\subset \mathrm{Hol}(\Delta, X)$ such that $\mathcal{F}$ is not compactly divergent, then there exist sequences $\{p_j\}\subset \Delta$ with $p_j\to p_0\in\Delta$ as $j\to\infty$, $\{f_j\} \subset \mathcal{F}, \{\rho_j\}\subset \mathbb R$ with $\rho_j>0$ and $\rho_j \to 0^+$ as $j\to \infty$ such that 
$$
g_j(\xi):=f_j(p_j+\rho_j\xi), \xi\in \mathbb  C,
$$
converges uniformly on any compact subsets of $\mathbb C$ to a non-constant $E$-Brody curve $g: \mathbb C\to X$.
\end{definition}

We now give the following.

\begin{theorem}\label{Prop 4.3}
$\mathbb C^n~(n\geq 2)$ is not of $E$-limit type for any  length function $E$ on $\mathbb C^n$. 
\end{theorem}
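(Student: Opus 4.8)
The plan is to exhibit a single non-normal, non-compactly-divergent family $\mathcal{F}\subset\mathrm{Hol}(\Delta,\mathbb C^n)$ for which every rescaling limit, if it exists, is either constant or fails to have bounded derivative. The natural device is to take $n=2$ (the general case follows by embedding $\mathbb C^2\hookrightarrow\mathbb C^n$ as a coordinate subspace, since a length function on $\mathbb C^n$ restricts to one on $\mathbb C^2$ and a Brody curve in the subspace is a Brody curve in $\mathbb C^n$) and consider maps of the form $f_j(z)=(z,\,h_j(z))$ where $h_j$ is chosen so that the family $\{h_j\}$ is non-normal into $\mathbb C$ but the non-normality is "fast": a standard Zalcman-type rescaling $g_j(\xi)=f_j(p_j+\rho_j\xi)=(p_j+\rho_j\xi,\,h_j(p_j+\rho_j\xi))$ has first coordinate converging to the constant $p_0$, so the limit curve $g$ must be of the form $\xi\mapsto(p_0,\,\psi(\xi))$ with $\psi:\mathbb C\to\mathbb C$ entire. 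The issue is to force $\psi$ to be either constant or of unbounded derivative in the given length function $E$.

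\textbf{Key steps.} First I would recall that on $\mathbb C^2$ any length function $E$ is, by the continuity and homogeneity in Definition \ref{Def3}, comparable on compact sets to the Euclidean length but may degenerate (grow or shrink) at infinity; the crucial point is that $E$ restricted to a one-dimensional slice $\{p_0\}\times\mathbb C$ is still a length function there. Second, I would choose $h_j$ so that any Zalcman limit of $\{f_j\}$, after passing to the slice, is a \emph{non-constant} entire function $\psi$ — e.g. built from a fixed non-normal family like $h_j(z)=e^{jz}$ or $h_j(z)=jz$ suitably modified — and then show that for this $\psi$ one has $|\psi'(\xi)|_E\not\lesssim 1$ on $\mathbb C$. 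For the linear choice $\psi(\xi)=c\xi$, $\psi'\equiv c$ is a fixed nonzero vector field pointing in the $e_2$ direction at the moving point $(p_0,c\xi)$, so $|\psi'(\xi)|_E = E\big((p_0,c\xi);\,(0,c)\big)$, and I would need a length function (or an argument valid for all length functions) making this unbounded as $\xi\to\infty$. Since we must handle \emph{every} $E$, the right move is instead to run the Zalcman rescaling lemma carefully: because $\mathbb C^n$ carries no bounded-below metric forcing equicontinuity, one can arrange that the only possible rescaling limits are reparametrizations of entire curves of \emph{polynomial growth of degree $\geq 1$} in the Euclidean sense, and then observe that such a curve has Euclidean derivative growing like a positive power of $|\xi|$; finally invoke that $E$, being continuous and positive on the unit tangent vectors over the curve's image, is bounded below on that image times unit vectors only if the image is relatively compact — which it is not — so $|g'|_E$ cannot be bounded unless $g$ is constant.

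\textbf{Main obstacle.} The delicate part is precisely the uniformity over all length functions $E$: a clever $E$ could, in principle, shrink rapidly at infinity and thereby tame an unbounded Euclidean derivative. To defeat this I expect one needs the full strength of the Zalcman rescaling construction — not just \emph{a} rescaling but the \emph{optimal} one given by the Zalcman–Pang lemma, which produces a limit $g$ with $g^\#(\xi)\le g^\#(0)=1$ in the spherical (or $E$-)metric — together with a Liouville-type argument: an entire curve $\mathbb C\to\mathbb C^n$ that is $E$-Brody and non-constant would, combined with the structure of $\mathbb C^n$ (e.g. Brody's lemma reparametrization and the fact that $\mathbb C^n$ admits a complete metric of the form $E$ only trivially), contradict normality already present in the original family. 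Concretely I anticipate the proof reduces to: (1) fix the bad family $f_j(z)=(jz,\,j^2z^2)$ or similar; (2) show it is non-normal and not compactly divergent; (3) show every subsequential Zalcman limit is a curve $\xi\mapsto(a\xi,\,b\xi^2)$ or a nonconstant polynomial curve, and (4) argue no length function can bound the derivative of a nonconstant polynomial curve on all of $\mathbb C$ because its image is unbounded and $E$ is positive and continuous, hence bounded below on compacta but the relevant compacta exhaust an unbounded set forcing $|g'|_E\to\infty$ along a subsequence of $\xi\to\infty$. Wrapping up, this shows $\mathbb C^n$ fails the defining property of $E$-limit type for \emph{every} $E$, which is the assertion of Theorem \ref{Prop 4.3}.
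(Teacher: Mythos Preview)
Your proposal identifies the right obstacle but does not overcome it. The claim in your step~(4) --- that no length function $E$ can make the derivative of a non-constant polynomial curve bounded --- is false. Take for instance
\[
E\bigl((w_1,w_2),(v_1,v_2)\bigr) \;=\; e^{-|w_1|^2-|w_2|^2}\sqrt{|v_1|^2+|v_2|^2}.
\]
For the curve $g(\xi)=(a\xi,b\xi^2)$ one has $g'(\xi)=(a,2b\xi)$ and hence
\[
|g'(\xi)|_E \;=\; e^{-|a|^2|\xi|^2-|b|^2|\xi|^4}\sqrt{|a|^2+4|b|^2|\xi|^2}\;\longrightarrow\;0
\]
as $|\xi|\to\infty$. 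So the fixed family $f_j(z)=(jz,j^2z^2)$ does \emph{not} witness failure of $E$-limit type for this $E$. Your earlier attempt $f_j(z)=(z,h_j(z))$ fares no better: the first coordinate of any rescaling converges to the constant $p_0$, so the limit curve lives in the single fibre $\{p_0\}\times\mathbb C$, where $E$ restricts to an arbitrary length function on $\mathbb C$; since any prescribed entire $\psi$ is Brody for a suitably chosen length function on $\mathbb C$, this cannot handle all $E$ either. You flag the ``delicate part'' yourself but do not resolve it.

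The missing idea is to let the witnessing family depend on $E$. The paper fixes, for the given $E$, an entire map $f(z)=(z,g(z))$ with $|f'|_E$ unbounded on $\mathbb C$, and then takes the family $f_m(z):=f(mz)$. The point is that the \emph{first} coordinate of any rescaling $\varphi_k(\xi)=f_{m_k}(p_k+\rho_k\xi)$ equals $m_kp_k+m_k\rho_k\xi$; locally uniform convergence of this in $\mathbb C$ forces $m_kp_k\to q\in\mathbb C$ and $m_k\rho_k\to A$, with $A\ne 0$ since the limit is non-constant. Hence every limit curve is simply $\varphi(\xi)=f(q+A\xi)$, an affine reparametrization of $f$ itself, and $|\varphi'|_E$ is unbounded for the \emph{same} reason $|f'|_E$ is. No lower bound on $E$ at infinity is required; whatever decay $E$ might have has already been absorbed into the choice of $g$. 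Your argument would be repaired by adopting this device: do not look for one family that works for every $E$, but encode the unboundedness of the $E$-derivative into the generating curve $f$ from the outset.
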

\begin{theorem}\label{T2} $(\mathbb C^*)^2$ is not of $ds^2_{FS}$-limit type, where $ds^2_{FS}$ is the Fubini-Study metric on $\mathbb P^2(\mathbb C)$.
\end{theorem}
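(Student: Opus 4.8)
The plan is to argue by contradiction: suppose $(\mathbb C^*)^2$ is of $ds^2_{FS}$-limit type, so there is a non-normal, non-compactly-divergent family $\mathcal F\subset\mathrm{Hol}(\Delta,(\mathbb C^*)^2)$ whose Zalcman reparametrizations $g_j(\xi)=f_j(p_j+\rho_j\xi)$ converge locally uniformly to a non-constant holomorphic curve $g=(g^1,g^2):\mathbb C\to(\mathbb C^*)^2$ with $|g'|_{ds^2_{FS}}\lesssim 1$ on $\mathbb C$, where we view $(\mathbb C^*)^2\hookrightarrow\mathbb P^2(\mathbb C)$ via $(w_1,w_2)\mapsto[1:w_1:w_2]$. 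The first step is to record what the $ds^2_{FS}$-Brody bound says about $g^1,g^2$: in the affine chart the Fubini–Study metric has the standard form, and boundedness of $|g'|_{ds^2_{FS}}$ gives, after a routine computation,
\[
\frac{|(g^1)'|^2+|(g^2)'|^2}{1+|g^1|^2+|g^2|^2}\lesssim 1\quad\text{on }\mathbb C.
\]
Thus each $g^k$ is an entire function omitting $0$, i.e. $g^k=e^{h_k}$ for entire $h_k$, and the displayed estimate controls $h_1',h_2'$.

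The key step is to show that this forces $g$ to be constant, contradicting the choice of $\mathcal F$. Write $g^k=e^{h_k}$. Then $(g^k)'=h_k'e^{h_k}$, so the estimate becomes $\bigl(|h_1'|^2|e^{h_1}|^2+|h_2'|^2|e^{h_2}|^2\bigr)\lesssim 1+|e^{h_1}|^2+|e^{h_2}|^2$. I would split $\mathbb C$ into the regions where one of $|e^{h_1}|$, $|e^{h_2}|$ dominates; on $\{|e^{h_1}|\ge 1,\ |e^{h_1}|\ge|e^{h_2}|\}$ we get $|h_1'|^2\lesssim 1+2|e^{h_1}|^2/|e^{h_1}|^2\lesssim$ — more carefully $|h_1'|\lesssim 1$ there, and similarly for the other regions and for the region where both $|e^{h_k}|\le 1$ (there the numerator $\le$ const forces $|h_k'e^{h_k}|\lesssim 1$, but $e^{h_k}$ could be small, so this case needs the observation that then $\mathrm{Re}\,h_k$ is bounded above and one uses the Borel–Carathéodory / Schwarz-type argument). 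The cleanest route is: since $g^k$ omits the two values $0$ and $\infty$ and — because $g$ takes values in $(\mathbb C^*)^2\subset\mathbb P^2$ — one shows using the $ds^2_{FS}$-bound that $g$ actually has at most linear growth in an appropriate sense, hence by a Liouville-type/Picard argument applied to the omitted-value structure, each $g^k$ is constant. The point specific to $(\mathbb C^*)^2$ (as opposed to $\mathbb C^n$, handled in Theorem \ref{Prop 4.3}) is that both $0$ and $\infty$ are omitted by each coordinate, which is exactly what a Brody curve in $\mathbb P^1$ that avoids two points cannot do nontrivially — a bounded-derivative entire map $\mathbb C\to\mathbb P^1\setminus\{0,\infty\}=\mathbb C^*$ must be constant, by the standard fact that $\mathbb C^*$ carries a complete hyperbolic-like metric off which such a curve cannot escape, or directly: $h_k$ entire with $|h_k'|e^{\mathrm{Re}\,h_k}/(1+e^{2\mathrm{Re}\,h_k})^{1/2}$ bounded forces $h_k'\equiv 0$.

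I expect the main obstacle to be making the last implication rigorous and uniform — i.e. genuinely extracting "$g^k$ constant" from the single inequality above without circular appeal to hyperbolicity of $(\mathbb C^*)^2$ (which is false) — and in particular handling the mixed regime where $|g^1|$ and $|g^2|$ are comparable and both large or both small, where neither coordinate's contribution can be discarded. The resolution I would pursue is to set $u=\log\sqrt{1+|g^1|^2+|g^2|^2}=\log|(1,g^1,g^2)|$ and note the Brody bound says $|\nabla u|\lesssim 1$ pointwise (the derivative of $g$ measured in $ds^2_{FS}$ dominates $|\partial u|$), while $u$ is a subharmonic function comparable to $\max(\mathrm{Re}\,h_1,\mathrm{Re}\,h_2,0)$ up to $O(1)$; a function with bounded gradient on $\mathbb C$ grows at most linearly, so each $\mathrm{Re}\,h_k$ grows at most linearly, hence each $h_k$ is a polynomial of degree $\le 1$, say $h_k=a_k z+b_k$. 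Plugging back, boundedness of $|h_k'|e^{\mathrm{Re}\,h_k}$ on the whole plane (on the part where $g^k$ dominates) forces $a_k=0$ unless that region is empty; a short case check then kills all the $a_k$, so $g$ is constant — contradiction. This completes the argument modulo the routine estimates, which I would carry out in the body of the proof.
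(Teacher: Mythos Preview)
Your argument has a fatal gap: you are in effect trying to prove that every non-constant $ds^2_{FS}$-Brody curve $g:\mathbb C\to(\mathbb C^*)^2$ is constant, and that is simply false. The map $g(z)=(e^z,1)=[1:e^z:1]$ is a non-constant entire curve into $(\mathbb C^*)^2$ with $|g'|_{FS}^2=\dfrac{2|e^z|^2}{(2+|e^z|^2)^2}\le \tfrac14$; more generally every curve $z\mapsto(e^{a_1z+b_1},e^{a_2z+b_2})$ is a Brody curve in $(\mathbb C^*)^2$. Your assertion that ``a bounded-derivative entire map $\mathbb C\to\mathbb P^1\setminus\{0,\infty\}$ must be constant'' is wrong for the same reason --- the spherical derivative of $e^z$ is $|e^z|/(1+|e^z|^2)\le\tfrac12$ --- and indeed the paper itself uses exactly these exponential curves (see Corollary~\ref{Co11} and Theorem~\ref{Th5}). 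Tracing your estimates, the error appears where you claim ``boundedness of $|h_k'|e^{\mathrm{Re}\,h_k}$ on the region where $g^k$ dominates'': on that region the denominator $1+|g^1|^2+|g^2|^2$ is comparable to $|g^k|^2=e^{2\,\mathrm{Re}\,h_k}$, so the Brody bound only gives $|h_k'|\lesssim 1$, not $|h_k'|e^{\mathrm{Re}\,h_k}\lesssim 1$. Thus you correctly arrive at $h_k$ affine-linear, but the subsequent ``case check'' killing the $a_k$ cannot succeed.

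Because non-constant Brody curves in $(\mathbb C^*)^2$ do exist, proving Theorem~\ref{T2} cannot be reduced to a statement about \emph{all} Brody curves; one must exhibit a \emph{specific} non-normal, non-compactly-divergent family $\{g_k\}\subset\mathrm{Hol}(\Delta,(\mathbb C^*)^2)$ for which \emph{no} Zalcman rescaling can converge to a Brody curve in $(\mathbb C^*)^2$. The paper does this via Winkelmann's construction: one builds $f(z)=(\exp z,\exp F(\sin(iz)))$ so that $\pi\circ(\log f)$ lands in a carefully chosen domain $\Omega_1$ in a product of elliptic curves, sets $g_k(z)=f(kz)$, and then shows that any putative Brody limit $\varphi=(e^u,e^v)$ (with $u,v$ affine, as you correctly deduce) would project into $\Omega_1$ yet meet $\Omega_1\setminus\overline{\Omega}_2$, contradicting Winkelmann's theorem that bounded-derivative curves in $\overline{\Omega}_1$ must lie in $\overline{\Omega}_2$. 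The whole point is that the obstruction is not intrinsic to $(\mathbb C^*)^2$ but comes from the particular family chosen.
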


The concept of normal family was first introduced in 1907 by P. Montel \cite{Montel} and generalized by O. Lehto and K. I. Virtanen \cite{L-V}. Since that time the subject of normal maps has been studied intensively (see \cite{A-K, Ber, Do, Wu, Zal} and references therein). In particular, in \cite{Do}, the authors showed criteria for normality of a family of holomorphic mappings in several complex variables into a complete hermitian complex space in the term of the non-constant limit curves.

The Marty's criterion (see \cite[Theorem 17, p. 226]{Al}) asserts that the normality of a family $\mathcal{F}$ of meromorphic functions on a plane domain $D\subset \mathbb C$ is equivalent to the local boundedness of the corresponding family $\mathcal{F}^\# $ of spherical derivatives $f^\# = | f'|/(1 + | f |^2)$. 

The next aim of this article is to generalize the Marty's criterion to several complex variables. Namely, we show the following theorem on the normality of families of holomorphic mappings in the term of the non-constant $E$-Brody curves.

\begin{theorem}\label{T1}
Let $\Omega$ be a domain in $\mathbb C$ and let $M$ be a complete hermitian complex space with a hermitian metric $E$. Let $\mathcal{F}\subset \mathrm{Hol}(\Omega, M)$. Then the family $\mathcal{F}$ is not normal if and only if there exist sequences $\{p_j\}\subset \Omega$ with $p_j\to p_0\in\Omega$ as $j\to\infty$, $\{f_j\} \subset \mathcal{F}, \{\rho_j\}\subset \mathbb R$ with $\rho_j>0$ and $\rho_j \to 0^+$ as $j\to \infty$ such that 
$$
g_j(\xi):=f_j(p_j+\rho_j\xi), \; \xi\in \mathbb  C
$$
satisfies one of the following two assertions
\begin{itemize}
\item[(i)] The sequence $\{g_j\}$ is compactly divergent on $\mathbb C$;
\item[(ii)] The sequence $\{g_j\}$ converges uniformly on compact subsets of $\mathbb C$  to a non-constant $E$-Brody curve 
$g: \mathbb C\to M$. In this case, the curve $g$ is called to be a limit Brody curve with respect to the hermitian metric $E,$ or shortly,  a limit 
$E$-Brody curve.
 \end{itemize}
\end{theorem}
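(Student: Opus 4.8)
The plan is to prove the two implications separately. The reverse implication (``a good rescaling exists $\Rightarrow\mathcal F$ is not normal'') is soft and follows from the definition of normality; the forward implication is the substantial one and rests on a Zalcman--Pang type renormalization together with a Hopf--Rinow argument to separate the two possible limiting behaviours. Throughout, $d_E$ denotes the distance on $M$ induced by $E$ and, for $f\in\mathrm{Hol}(\Omega,M)$, write $f^{\#}(z):=|f'(z)|_E$.

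\emph{Reverse implication.} Suppose the sequences $\{p_j\}\subset\Omega$, $\{f_j\}\subset\mathcal F$, $\{\rho_j\}$ exist with $\{g_j\}$ satisfying (i) or (ii), and assume for contradiction that $\mathcal F$ is normal; then, after passing to a subsequence, $f_j\to f$ uniformly on compact subsets of $\Omega$ for some $f\in\mathrm{Hol}(\Omega,M)$. Fix a compact $C\subset\mathbb C$ and a compact neighbourhood $K$ of $p_0$ in $\Omega$. Since $p_j\to p_0$ and $\rho_j\to 0^{+}$, we have $p_j+\rho_j\xi\in K$ for all $\xi\in C$ and all large $j$, and $p_j+\rho_j\xi\to p_0$ uniformly in $\xi\in C$; combining the uniform convergence $f_j\to f$ on $K$ with the uniform continuity of $f$ on $K$ gives $g_j(\xi)=f_j(p_j+\rho_j\xi)\to f(p_0)$ uniformly on $C$. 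Hence $\{g_j\}$ converges uniformly on compacta to the constant curve $f(p_0)$, which contradicts (i) (a sequence converging in $M$ eventually meets any fixed compact neighbourhood of its limit, so is not compactly divergent) and contradicts (ii) (the limit would then not be non-constant). Therefore $\mathcal F$ is not normal.

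\emph{Forward implication.} Assume $\mathcal F$ is not normal and fix $\{f_j\}\subset\mathcal F$ with no subsequence converging uniformly on compacta to an element of $\mathrm{Hol}(\Omega,M)$. \emph{Case 1: $\{f_j^{\#}\}$ is locally uniformly bounded.} Then on each compact $K\subset\Omega$ the $f_j$ are Lipschitz for $d_E$ with a common constant $C_K$. If $\{f_j(z_0)\}$ had a bounded subsequence for some $z_0\in\Omega$, then along it $\{f_j\}$ would be locally uniformly bounded and equicontinuous, and the Arzel\`a--Ascoli theorem (using local compactness of $M$) would yield a further subsequence converging uniformly on compacta --- a contradiction. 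So $f_j(z_0)$ eventually leaves every compact subset of $M$; taking $p_j\equiv z_0$ and $\rho_j=1/j$, the curves $g_j(\xi)=f_j(z_0+\xi/j)$ satisfy $|g_j'|_E\le C_K/j$ on each fixed disk, so $d_E(g_j(\xi),g_j(0))\to0$ uniformly on compacta while $g_j(0)=f_j(z_0)\to\infty$. Since $(M,d_E)$ is complete and locally compact it is proper (Hopf--Rinow), and it follows that $\{g_j\}$ is compactly divergent, which is (i). \emph{Case 2: $\{f_j^{\#}\}$ is not locally uniformly bounded.} After passing to a subsequence there are $z_j\to p_0\in\Omega$ with $f_j^{\#}(z_j)\to\infty$. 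Choose $r>0$ with $\overline{D(p_0,r)}\subset\Omega$, let $w_j\in\overline{D(p_0,r)}$ maximize $z\mapsto f_j^{\#}(z)\,(r-|z-p_0|)$, and set $p_j=w_j$, $\rho_j=1/f_j^{\#}(w_j)$. A routine computation gives $\rho_j\to0^{+}$ and shows that $g_j(\xi)=f_j(p_j+\rho_j\xi)$ satisfies $g_j^{\#}(0)=1$ and $g_j^{\#}\le 2$ on $\{|\xi|\le R_j\}$ with $R_j\to\infty$. Passing to a subsequence so that $w_j\to p_0'\in\overline{D(p_0,r)}\subset\Omega$, the $g_j$ are uniformly $d_E$-Lipschitz on each fixed disk, and exactly as in Case 1 there is a dichotomy: either $g_j(0)$ leaves every compact subset of $M$, and then $\{g_j\}$ is compactly divergent, giving (i); or a subsequence of $\{g_j(0)\}$ is bounded, and then Arzel\`a--Ascoli produces a subsequence converging uniformly on compacta to a holomorphic curve $g:\mathbb C\to M$ with $|g'|_E\le 2$, i.e.\ an $E$-Brody curve, which is non-constant because $|g'(0)|_E=\lim_j|g_j'(0)|_E=1$; this is (ii).

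The main obstacle lies in Case 2: implementing the Zalcman--Pang renormalization when the target is the (possibly singular) complete hermitian complex space $M$ and, above all, making the dichotomy rigorous. One must use the completeness of $(M,d_E)$, through properness, to guarantee that a family of uniformly $d_E$-Lipschitz curves with $g_j(0)\to\infty$ does nothing but diverge compactly; and one must verify that uniform convergence on compacta of holomorphic curves into a complex space preserves both holomorphy of the limit and --- via Cauchy estimates and the continuity of $E$ on $TM$ --- convergence of the $E$-derivatives, so that the limit is genuinely a non-constant $E$-Brody curve. The renormalization estimate itself and the various passages to subsequences are routine.
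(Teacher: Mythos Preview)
Your proof is correct and follows the same overall architecture as the paper --- a renormalization producing equicontinuous rescalings, followed by a dichotomy between compact divergence and convergence to a non-constant $E$-Brody curve --- but the technical implementation differs in three places. First, the paper splits the forward direction according to whether $\mathcal F$ itself contains a compactly divergent sequence (and, if not, invokes \cite[Lemma 2.6]{Do} to produce points where $|f_k'|_E\ge k^3$), whereas you split according to whether $\{f_j^{\#}\}$ is locally uniformly bounded for your chosen bad sequence; your Case~1 then reproduces the compactly divergent alternative by hand via the Lipschitz estimate and Hopf--Rinow. Second, the paper obtains the rescaling centers via Berteloot's iteration lemma (Lemma~\ref{L6}), applied with $\tau=1+1/k$, while you use the classical Zalcman--Pang trick of maximizing $(r-|z-p_0|)\,f_j^{\#}(z)$; both yield $|g_j'|_E$ uniformly bounded on expanding disks with $|g_j'(0)|_E=1$. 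Third, to extract the convergent subsequence the paper cites Wu's normality lemma for equicontinuous, non-compactly-divergent families in complex spaces, whereas you appeal directly to Arzel\`a--Ascoli together with properness of the complete length space $(M,d_E)$. Your route is more self-contained (no appeal to \cite{Do} or \cite{Wu}) at the cost of having to justify Hopf--Rinow and the preservation of holomorphy and $E$-derivatives under uniform limits in a possibly singular complex space --- points you rightly flag as the real obstacles. The reverse implication is handled identically in both proofs.
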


\begin{theorem}\label{T3}
Let $\Omega$ be a domain in $\mathbb C$. Let $X$ be a compact complex space with a hermitian metric $E$. Let $S$ be a complex hypersurface in $X$ and let $M=X\setminus S$. Let  $\mathcal{F}\subset \mathrm{Hol}(\Omega, M)$. Then the family $\mathcal{F}$ is not normal if and only if there exist sequences $\{p_j\}\subset \Omega$ with $p_j\to p_0\in\Omega$ as $j\to\infty$, $\{f_j\} \subset \mathcal{F}, \{\rho_j\}\subset \mathbb R$ with $\rho_j>0$ and $\rho_j \to 0^+$ as $j\to \infty$ such that 
$$
g_j(\xi):=f_j(p_j+\rho_j\xi), \; \xi\in \mathbb  C
$$
satisfies one of the following two assertions
\begin{itemize}
\item[(i)] The sequence $\{g_j\}$ is compactly divergent on $\mathbb C$;
\item[(ii)] The sequence $\{g_j\}$ converges uniformly on compact subsets of $\mathbb C$  to a non-constant $E$-Brody curve $g: \mathbb C\to M$. 
 \end{itemize}
\end{theorem}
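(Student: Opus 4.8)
The plan is to adapt the proof of Theorem \ref{T1}, which already gives the desired dichotomy when the target $M$ is a \emph{complete} hermitian complex space. The subtlety here is that $M = X\setminus S$ need not be complete (indeed $S$ has been \emph{removed}), so one cannot apply Theorem \ref{T1} directly. The natural idea is to regard $\mathcal F$ also as a family into the ambient compact space $X$, which \emph{is} complete (being compact), and then control what happens near $S$.

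First I would prove the ``if'' direction, which is the easy one: if $\{g_j\}$ is compactly divergent on $\mathbb C$, then $\{f_j\}$ cannot be normal at $p_0$ (any locally uniform limit would force $g_j(0)=f_j(p_j)$ to stay in a compact subset of $M$, contradicting compact divergence after rescaling), and if $\{g_j\}$ converges locally uniformly to a non-constant $E$-Brody curve $g$, then a standard Zalcman-type rescaling argument shows $\mathcal F$ is not equicontinuous at $p_0$, hence not normal. These are the same arguments used for Theorem \ref{T1} and need no change, since they only use that $M$ sits inside a hermitian space.

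For the ``only if'' direction, suppose $\mathcal F$ is not normal. Viewing $\mathcal F\subset\mathrm{Hol}(\Omega,X)$ with $X$ compact hence complete, Theorem \ref{T1} applied to $X$ produces the sequences $\{p_j\}, \{f_j\}, \{\rho_j\}$ with $g_j(\xi)=f_j(p_j+\rho_j\xi)$ satisfying either (i) compact divergence in $X$, or (ii) locally uniform convergence to a non-constant $E$-Brody curve $g\colon\mathbb C\to X$. Since $X$ is compact, alternative (i) for $X$ is vacuous, so (ii) holds: $g_j\to g$ locally uniformly, $g$ non-constant, $|g'|_E\lesssim 1$. It remains to locate the image of $g$. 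If $g(\mathbb C)\subset M$ we are in case (ii) of the theorem and we are done. Otherwise $g(\mathbb C)\cap S\neq\varnothing$; I would then argue that in fact $g(\mathbb C)\subset S$: since $S$ is a hypersurface and $g$ is a non-constant holomorphic curve, $g^{-1}(S)$ is either discrete or all of $\mathbb C$, and the possibility that it is discrete and nonempty has to be excluded --- this is where one uses that each $g_j$ has image in $M=X\setminus S$, so by Hurwitz's theorem (applied to a local defining function of $S$ composed with $g_j$) the limit $g$ either avoids $S$ entirely or maps into $S$. Thus $g(\mathbb C)\subset S$. Now $S$ is compact (a closed subvariety of the compact $X$) with the restricted hermitian metric, hence complete; moreover each $g_j$ maps into $M$, so $\{g_j\}$ as a family into $S$ is genuinely non-normal \emph{and} compactly divergent relative to $M$ --- more precisely, I would re-run the rescaling argument one level down inside a neighborhood of a point of $g(\mathbb C)\subset S$: because $g$ is non-constant into $S\subset X$, the original family $\mathcal F$ fails equicontinuity, and the sequence $g_j$, having values in $M$ but accumulating on $S$, is compactly divergent in $M$ on $\mathbb C$, placing us in case (i).

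The main obstacle I anticipate is the dichotomy step: showing that the limit curve $g\colon\mathbb C\to X$ obtained from Theorem \ref{T1} cannot ``straddle'' $S$ and $M$, i.e. that either $g(\mathbb C)\subset M$ or $\{g_j\}$ is compactly divergent in $M$. The clean way is the Hurwitz argument sketched above: work in a coordinate chart of $X$ around an accumulation point $q=g(\xi_0)\in S$, let $h$ be a holomorphic function vanishing exactly on $S$ near $q$, note $h\circ g_j$ is nowhere zero (as $g_j$ avoids $S$), so by Hurwitz either $h\circ g\equiv 0$ near $\xi_0$ (forcing, by the identity principle and connectedness, $g(\mathbb C)\subset S$) or $h\circ g$ is nowhere zero near $\xi_0$, contradicting $g(\xi_0)=q\in S$. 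Once $g(\mathbb C)\subset S$ is established, one must still check that the corresponding $\{g_j\}$ can be taken compactly divergent in $M$: for this I would note that $g_j\to g$ uniformly on compacta with $g(\mathbb C)$ a \emph{compact} subset of $S$ disjoint from $M$, whence for every compact $K\subset\mathbb C$ and every compact $L\subset M$ we have $g_j(K)\cap L=\varnothing$ for $j$ large, which is exactly compact divergence --- and this completes case (i). I expect the write-up of this last verification to be short but to require care with the definition of compact divergence.
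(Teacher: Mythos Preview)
Your approach is essentially the paper's: reduce to the compact ambient $X$ via a Zalcman-type rescaling (the paper cites Berteloot's Lemme~2.1 directly rather than invoking its own Theorem~\ref{T1}), then use a Hurwitz argument---exactly the paper's Lemma~\ref{lem4}---to force the limit curve $g$ either into $M$ (case~(ii)) or into $S$, and in the latter case read off compact divergence of $\{g_j\}$ in $M$ (case~(i)). Your write-up of the Hurwitz step and of the final compact-divergence verification is in fact more explicit than the paper's, which leaves both points terse.

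There is one logical transition you skip (and, to be fair, the paper's short proof does too): applying Theorem~\ref{T1} with target $X$ requires $\mathcal F$ to be non-normal \emph{in $\mathrm{Hol}(\Omega,X)$}, whereas the hypothesis only gives non-normality in $\mathrm{Hol}(\Omega,M)$. These are not the same: $\mathcal F$ may well be normal in $X$ while failing to be normal in $M$---take any sequence $f_j$ converging locally uniformly in $X$ to a map landing in $S$. This residual case is easy to handle once noticed: if some subsequence $f_j\to f$ in $\mathrm{Hol}(\Omega,X)$ with $f(\Omega)\subset S$ (forced by the same Hurwitz lemma applied on $\Omega$), then with $p_j\equiv p_0$ and any $\rho_j\to 0^+$ one has $g_j(\xi)=f_j(p_0+\rho_j\xi)\to f(p_0)\in S$ uniformly on compacta, so $\{g_j\}$ is compactly divergent in $M$ and we land in case~(i). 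You should insert this sentence before invoking Theorem~\ref{T1} on $X$.
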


\begin{remark} (i)\ Theorem \ref{T1} is also a generalization of  Brody's theorem \cite{R.B} and Zalcman's theorem \cite{Ber, Zal}. 

\noindent
(ii)\ In Theorem \ref{T1}, the hermitian metric $E$ must be complete. Fortunately, Theorem \ref{T3} shows that the assertion still holds in the case where the hermitian metric maybe is not complete.
\end{remark}

This paper is organized as follows. In Section $2$, we will prove Theorems \ref{Prop 4.3} and \ref{T2}. Then the proofs of Theorems 
\ref{T1} and  \ref{T3}  will be given in Section $3$.

\begin{Acknowlegement} The authors would like to thank Prof. Kang-Tae Kim for his precious discussions on this material. 
\end{Acknowlegement}

\section{Non-existence of limit Brody curves in $\mathbb C^n$ and $(\mathbb C^*)^2$}
First of all, we give the following.

\vskip0.2cm
\noindent
{\it Proof of Theorem \ref{Prop 4.3}}

Let $g: \mathbb C \to \mathbb C^{n-1}$ be a holomorphic function such that the function $E\Big(f(z), f'(z)\Big)$ is not bounded on $\mathbb C$, where $f:\mathbb C\to \mathbb C^n\hookrightarrow\mathbb P^n(\mathbb C)$ is the holomorphic function defined by $f(z)=(z,g(z))=[1:z:g_1(z):\cdots:g_{n-1}(z)]\in \mathbb P^n(\mathbb C) $ for all $z \in\mathbb C$. Now denote by $\{f_m\}\subset \mathrm{Hol}(\Delta, \mathbb C^n)$ the sequence of holomorphic discs given by
$$
f_m(z):=f(mz)=(mz,g(mz))=[1:mz:g_1(mz):\cdots:g_{n-1}(mz)]
$$ 
for every $z\in \Delta$.
Suppose that there exist a sequence $\{n_k\}\subset \mathbb N$, a sequence $\{p_k\}\Subset \Delta$, and a sequence $\{\rho_k\}\subset (0,+\infty) $ with $\rho_k \to 0^+ $ as $k\to \infty$ such that the sequence $\{\varphi_k\}$ defined by
\begin{equation*}
\begin{split}
\varphi_k(\xi):&=f_{m_k}(p_k+\rho_k \xi)=f(m_k p_k+m_k\rho_k \xi)\\
              &=(m_k p_k+m_k\rho_k \xi,g(m_k p_k+m_k\rho_k \xi))
\end{split}
\end{equation*}
for each $k\in \mathbb N^*$ and for $|\xi|< 1/\rho_k $, converges uniformly on every compact subset of $\mathbb C$ to a non-constant holomorphic curve $\varphi: \mathbb C\to \mathbb C^n$. Then $m_kp_k\to q\in \mathbb C$ and $m_k\rho_k\to A\in \mathbb C^*$ as $k\to \infty$. Thus, we obtain that $\varphi(\xi)=(q+A\xi,g(q+A\xi))=[1:q+A\xi:g_1(q+A\xi):\cdots :g_{n-1}(q+A\xi)]$ for all $\xi \in \mathbb C$. We note that $E(\varphi(z),\varphi'(z))$ is not bounded on $\mathbb C$. This completes the proof. \hfill $\boxed{ }$

\begin{corollary}\label{Cor2}
$\mathbb C^n~(n\geq 2)$ is not of $ds^2_{FS}$-limit type.
\end{corollary}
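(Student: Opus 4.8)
The plan is to deduce Corollary \ref{Cor2} directly from Theorem \ref{Prop 4.3} by observing that the Fubini-Study metric, restricted to the standard copy of $\mathbb C^n$ inside $\mathbb P^n(\mathbb C)$, is itself a length function on $\mathbb C^n$ in the sense of Definition \ref{Def3}. Indeed, on the affine chart $\{z_0\neq 0\}\cong \mathbb C^n$, the Fubini-Study form $ds^2_{FS}$ is a smooth hermitian metric with everywhere nondegenerate associated length function $E_{FS}(p,v)=|v|_{FS}$; it is real-valued, nonnegative, continuous on $T\mathbb C^n$, vanishes only on the zero section, and is homogeneous of degree one in $v$ because $ds^2_{FS}$ is a hermitian form. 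Hence $E_{FS}$ satisfies all the axioms of a length function on $\mathbb C^n$.

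First I would make this identification explicit: write down the affine expression of $ds^2_{FS}$ on $\mathbb C^n$ and check the three bullet points of Definition \ref{Def3}. Then I would invoke Theorem \ref{Prop 4.3} with the specific choice $E=E_{FS}$: since $\mathbb C^n$ is not of $E$-limit type for \emph{any} length function $E$, in particular it is not of $E_{FS}$-limit type. Finally I would note that ``$ds^2_{FS}$-limit type'' in the statement of Corollary \ref{Cor2} is by definition ``$E_{FS}$-limit type'' (this is exactly the convention fixed in Definition \ref{Def1}, where a $ds^2_{FS}$-Brody curve in a domain of $\mathbb P^n(\mathbb C)$ means an $E$-Brody curve for the length function coming from the Fubini-Study metric), so the two notions coincide and the corollary follows.

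The only genuine point requiring care — and the one I expect to be the main (minor) obstacle — is checking that the Fubini-Study length function does not degenerate anywhere on the affine chart and is globally continuous there; but this is immediate from the fact that $ds^2_{FS}$ is a genuine Kähler metric on all of $\mathbb P^n(\mathbb C)$, so its restriction to the open chart $\mathbb C^n$ is a smooth positive-definite hermitian metric, and the associated norm on each fiber is continuous and vanishes precisely at the origin of the fiber. No further work is needed: the corollary is an immediate specialization of Theorem \ref{Prop 4.3}.

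\begin{proof*}
By Definition \ref{Def1}, a $ds^2_{FS}$-Brody curve in $\mathbb C^n\hookrightarrow \mathbb P^n(\mathbb C)$ is precisely an $E_{FS}$-Brody curve, where $E_{FS}$ denotes the length function on $\mathbb C^n$ induced by restricting the Fubini-Study metric $ds^2_{FS}$ of $\mathbb P^n(\mathbb C)$ to the affine chart $\{[z_0:\cdots:z_n]:z_0\neq 0\}\cong \mathbb C^n$. Since $ds^2_{FS}$ is a smooth positive-definite hermitian metric on all of $\mathbb P^n(\mathbb C)$, its restriction to this chart is again a smooth positive-definite hermitian metric, and hence the associated fiberwise norm $E_{FS}(p,v)=|v|_{ds^2_{FS}}$ is a real-valued, nonnegative, continuous function on $T\mathbb C^n$ with $E_{FS}(p,v)=0$ iff $v=0$ and $E_{FS}(p,av)=|a|\,E_{FS}(p,v)$ for all $a\in\mathbb C$. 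Thus $E_{FS}$ is a length function on $\mathbb C^n$ in the sense of Definition \ref{Def3}. Now Theorem \ref{Prop 4.3} asserts that $\mathbb C^n~(n\geq 2)$ is not of $E$-limit type for any length function $E$; applying this with $E=E_{FS}$ shows that $\mathbb C^n$ is not of $E_{FS}$-limit type, that is, not of $ds^2_{FS}$-limit type.
\end{proof*}
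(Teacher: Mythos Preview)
Your proof is correct and is exactly the specialization the paper intends: the paper states Corollary~\ref{Cor2} immediately after the proof of Theorem~\ref{Prop 4.3} with no separate argument, so the only content is the observation that the Fubini--Study metric restricted to the affine chart $\mathbb C^n\subset\mathbb P^n(\mathbb C)$ is a length function in the sense of Definition~\ref{Def3}. You have made this explicit, and nothing more is required.
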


\begin{remark} 

\noindent
i) By Theorem \ref{Prop 4.3}, there does not exist a sequence $\{\varphi_k\}$ which converges  uniformly on every compact subset of $\mathbb C$ to any Brody curve in $\mathbb C^n$. But, there is a sequence $\{\varphi_k\}$ which converges  uniformly on every compact subset of $\mathbb C$ to a Brody curve in $\mathbb P^n(\mathbb C)$ (cf. Theorem \ref{T1}).

\noindent
ii) We can see that $T_r(f)\approx T_r(g)$, where $T_r(f)$ is the
Nevanlinna-Cartan characteristic function of $f$ (see \cite[Theorem 2.5.12, p. 64]{N-W}). Moreover since $\varphi(\xi)=(q+A\xi,g(q+A\xi))=[1:q+A\xi:g_1(q+A\xi):\cdots :g_{n-1}(q+A\xi)]$ for all $\xi \in \mathbb C$, it follows that $T_r(\varphi)\approx T_r(f)\approx T_r(g)$.

\noindent
iii) In \cite{Do}, the authors proved that the complement of any hyperbolic hypersurface in a compact complex space are Zalcman. In particular, $\mathbb C^*$ and $\mathbb C$ are Zalcman. Corollary \ref{Cor2} showed that $\mathbb C^n~(n\geq 2)$ is not of $ds^2_{FS}$-limit type. However, Conjecture \ref{Con} remains still open. 

\end{remark}

Attempting to prove Theorem \ref{T2}, we recall the Winkelmann's construction of compact  complex torus $T$, domains $\Omega_1, \Omega_2$ with $\Omega_2\subset \Omega_1\subset T$ (see \cite{Win}). 

Let $E'=\mathbb C /\Gamma'$ and $E''=\mathbb C /\Gamma''$  be elliptic curves and $T=E'\times E''=\mathbb C^2/\Gamma$, where $\Gamma'=\mathbb Z\oplus (2\pi i \mathbb Z)$, $\Gamma''=(\sqrt{2}\mathbb Z)\oplus (2\pi i \mathbb Z)$, 
$\Gamma=\Gamma'\times \Gamma''.$  Let $\pi': \mathbb C\to E'$, $\pi'': \mathbb C\to E''$ and $\pi=(\pi',\pi''): \mathbb C^2 \to T$ be the natural projections. Then we see that $E'$ is not isogenous to $E''$. The compact complex torus $T$ carries a hermitian metric $h$ induced by the euclidean metric on $\mathbb C^2$ (i.e. $h=dz_1 \otimes d\bar z_1 +dz_2 \otimes d\bar z_2$). The associated distance function is said to be 
$d$ and the injectivity radius $\rho$ is given by $\rho=\frac{1}{2}\min_{\gamma\in \Gamma\setminus \{0\}}\|\gamma\|.$

We choose numbers $0<\rho'<\rho''<\rho$ and define 
$$
W=B_{\rho'}(E',e).
$$
Furthermore we choose $0<\delta<\rho/3$. Let $s: \mathbb C\to \mathbb C$ be a holomorphic function such that
$$
s(\overline{B_{\rho'}(\mathbb C, 0)}) \subset B_{3\delta}(\mathbb C, 0)
$$
and  $\mathrm{diam}(s(B_{\rho'}(\mathbb C, 0)) )>2\delta$.  Now let $\sigma=\pi''\circ s$. Then there exist complex numbers $t,t'\in B_{\rho'}(\mathbb C, 0)$ such that
$$
d_{E''}(\sigma(t), \sigma(t'))>2\delta.
$$

 Define $\Omega_2=(E'\setminus \overline{W})\times E''$ and $\Omega_1=\Omega_2\cup \Sigma$ with
$$
\Sigma=\{(x,y): x\in \overline{ W}, y\in E'', d_{E''}(y,\sigma(x))<\delta\}. 
$$

J. Winkelmann \cite{Win} showed the following proposition which is a slight improvement of Arakelyan's theorem.
\begin{proposition}\label{Prop1}
Let $B$ be a closed subset in $\mathbb C$ for which $\mathbb P_1\setminus B$ is connected and locally connected at $\infty$. Let $q$ be a point in the interior of $B$ and let $f:B\to \mathbb C$ be a continuous function which is holomorphic in the interior of $B$. Furthermore let $\epsilon: B\to \mathbb R^+$ be a continuous function. Then there exists an entire function $F$ such that
$$
F(q)=f(q),\; F'(q)=f'(q)\; \text{and}\; |F(z)-f(z)|<\epsilon(z)
$$
for all $z\in B$.
\end{proposition}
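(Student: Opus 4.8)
The plan is to reduce Proposition \ref{Prop1} to the classical Arakelyan approximation theorem (with a variable error function and \emph{no} interpolation constraint) by a division trick centred at $q$. First I would introduce $P(z) = f(q) + f'(q)(z-q)$, the Taylor polynomial of degree $\le 1$ of $f$ at $q$; this is legitimate since $q$ lies in the interior of $B$, where $f$ is holomorphic. Because $f-P$ vanishes to order at least $2$ at $q$, the function
$$
g(z) := \frac{f(z) - P(z)}{(z-q)^2}
$$
has a removable singularity at $q$ (with value $\tfrac12 f''(q)$ there), hence extends to a function that is continuous on $B$ and holomorphic on the interior of $B$. Crucially, the topological hypotheses are untouched, because we keep the \emph{same} set $B$.

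Next I would apply Arakelyan's theorem to $g$ on $B$ with the strictly positive continuous error function
$$
\delta(z) := \frac{\epsilon(z)}{2\,(1+|z-q|)^2},
$$
obtaining an entire function $G_0$ with $|G_0(z)-g(z)| < \delta(z)$ for every $z\in B$. Here it is essential to use the full strength of Arakelyan's theorem, namely that it permits an arbitrary positive continuous (in particular rapidly decaying) error function: a constant error would not do when $\epsilon(z)\to 0$ fast at $\infty$, which is precisely the reason one cannot simply take an Arakelyan approximant of $f$ and correct its $1$-jet at $q$ by adding an affine polynomial.

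Finally I would set
$$
F(z) := P(z) + (z-q)^2\,G_0(z),
$$
which is entire. Since the factor $(z-q)^2$ and its derivative vanish at $q$, a direct computation gives $F(q) = P(q) = f(q)$ and $F'(q) = P'(q) = f'(q)$. Moreover $F(z) - f(z) = (z-q)^2\bigl(G_0(z) - g(z)\bigr)$, so for all $z\in B$
$$
|F(z) - f(z)| = |z-q|^2\,|G_0(z) - g(z)| < |z-q|^2\,\delta(z) \le \tfrac12\,\epsilon(z) < \epsilon(z),
$$
which is the desired estimate. The only steps requiring genuine care are checking that $g$ meets the hypotheses of Arakelyan's theorem — continuity at $q$ through the removable singularity, and holomorphy on the whole interior of $B$ — and the elementary bookkeeping behind the choice of $\delta$; beyond locating the division trick, no serious obstacle arises.
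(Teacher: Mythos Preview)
The paper does not supply a proof of this proposition: it is quoted from Winkelmann \cite{Win} with the remark that it is ``a slight improvement of Arakelyan's theorem,'' and is then used as a black box. So there is no argument in the paper to compare yours against.

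That said, your reduction is correct and is the standard way to graft a finite jet--interpolation constraint onto an approximation theorem. Subtracting the $1$-jet $P$ at $q$ and dividing by $(z-q)^2$ produces a function $g$ that is continuous on $B$ and holomorphic on its interior (the removable singularity at $q$ is handled exactly as you say, and the underlying set $B$ is unchanged), and the choice $\delta(z)=\epsilon(z)\big/\bigl(2(1+|z-q|)^2\bigr)$ makes the final estimate go through since $|z-q|^2\le (1+|z-q|)^2$. The reconstruction $F=P+(z-q)^2G_0$ then automatically matches the $1$-jet of $f$ at $q$.

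The only point worth flagging is terminological. What you call ``the full strength of Arakelyan's theorem'' --- approximation with an \emph{arbitrary} positive continuous error function on an Arakelyan set --- is itself a refinement of Arakelyan's original (constant-$\epsilon$) statement and is not entirely free; one should cite it explicitly rather than fold it into the name. Your argument therefore reduces the proposition to exactly the statement obtained by deleting the two interpolation conditions, which is indeed what the phrase ``slight improvement'' is pointing at. In short: your proof correctly shows that, once the variable-error version is in hand, adding the interpolation at $q$ is the easy part.
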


In \cite{Win}, J. Winkelmann proved the following theorem.

\begin{theorem}[J. Winkelmann] \label{Th1}  With notations as above one has
\begin{itemize}
\item[(i)] For every point $p\in \Omega_1$ and every $v\in T_p(\Omega_1)$ there is a non-constant holomorphic map $f: \mathbb C \to \Omega_1$ with $p=f(0), v=f'(0)$ and $\overline{\Omega}_1=\overline{f(\mathbb C)}$. 
\item[(ii)]If $f: \mathbb C\to T$ is a non-constant holomprphic map with bounded derivative (with respect to the euclidean metric on $\mathbb C$ and $h$ on $T$ ) and $f(\mathbb C)\subset \overline{\Omega}_1$, then $f(\mathbb C)\subset \overline{\Omega}_2$. Moreover, $f$ is affine-linear and $\overline{f(\mathbb C)}$ is closed analytic subset of $T$. 
\end{itemize}
\end{theorem}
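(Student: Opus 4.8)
The plan is to prove the two assertions by completely different means: (i) is an existence/approximation statement, obtained from Arakelyan's theorem (Proposition \ref{Prop1}) together with jet interpolation, while (ii) is a rigidity statement in which all the geometry of the construction is used, namely the injectivity radius $\rho$, the thresholds $\rho',\delta$, the diameter estimate for $\sigma$, and the non-isogeny of $E'$ and $E''$. I will begin with (ii). Let $f:\mathbb C\to T$ be non-constant with $|f'|_h\lesssim 1$ and $f(\mathbb C)\subset\overline\Omega_1$, and lift it through $\pi:\mathbb C^2\to T$ to $\tilde f=(\tilde f_1,\tilde f_2):\mathbb C\to\mathbb C^2$. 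Since $h$ is the flat metric and $\pi$ is a local isometry, the hypothesis says $|\tilde f_1'|^2+|\tilde f_2'|^2$ is bounded, so $\tilde f_1',\tilde f_2'$ are bounded entire functions, hence constant by Liouville; thus $\tilde f(z)=a+bz$ is affine-linear with $b=(b_1,b_2)\neq 0$, and in particular $f$ is affine-linear. Consequently $A:=\overline{f(\mathbb C)}$ is a subtorus coset of $T$, being the closure of $\pi(a+\mathbb C b)$ for the complex line $\mathbb C b$.

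The heart of (ii) is then a dimension count for $A$, projected by $\mathrm{pr}_1:T\to E'$. Over any $x\in W$ the only points of $\overline\Omega_1$ in the fibre are the closed ball $\{y:d_{E''}(y,\sigma(x))\le\delta\}$, because over the open ball $W$ the set $\Omega_2$ contributes nothing and $\overline\Sigma$ contributes exactly this ball. Since $\delta<\rho/3$, such a ball contains no positive-dimensional coset of a subtorus of $E''$ (a closed geodesic has two points at distance $\rho>2\delta$). Hence whenever $x\in W\cap\mathrm{pr}_1(A)$, the fibre $A_x$, being a subtorus coset of $E''$ lying in that ball, is a single point, so $\dim_{\mathbb R}A=\dim_{\mathbb R}\mathrm{pr}_1(A)$ as soon as $\mathrm{pr}_1(A)$ meets $W$. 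If $b_1\neq 0$ then $\mathrm{pr}_1(A)=E'$ (the first coordinate of $f$ is $\pi'(a_1+b_1z)$, which is onto), so $\mathrm{pr}_1(A)$ meets $W$ and $\dim_{\mathbb R}A=2$; thus $A$ is a one-dimensional complex subtorus coset. Because $E'$ is not isogenous to $E''$, the only such cosets are the horizontal $E'\times\{y_0\}$ and the vertical $\{x_0\}\times E''$, there being no graphs of isogenies. The horizontal one is excluded by the diameter hypothesis: using the points $t,t'$ (viewed in $W$ via the lift $\overline{B_{\rho'}(\mathbb C,0)}\to\overline W$) with $d_{E''}(\sigma(t),\sigma(t'))>2\delta$, membership in $\overline\Omega_1$ forces $d_{E''}(y_0,\sigma(t))\le\delta$ and $d_{E''}(y_0,\sigma(t'))\le\delta$, contradicting the triangle inequality. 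As the vertical coset has $\mathrm{pr}_1(A)=\{x_0\}\neq E'$, the case $b_1\neq 0$ is impossible. Hence $b_1=0$, so $f_1\equiv x_0$ and $f(\mathbb C)=\{x_0\}\times E''$; if $x_0\in W$ the fibre constraint is again violated, so $x_0\in E'\setminus W$ and $f(\mathbb C)\subset (E'\setminus W)\times E''=\overline\Omega_2$. This yields all three conclusions: $f(\mathbb C)\subset\overline\Omega_2$, $f$ affine-linear, and $\overline{f(\mathbb C)}=\{x_0\}\times E''$ a closed analytic subset of $T$.

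For (i), fix $p\in\Omega_1$, $v\in T_p\Omega_1$, and a countable set $\{w_n\}$ dense in $\overline\Omega_1$, and build a guide. Choose a closed comb/tree $B\subset\mathbb C$ with $0$ in its interior, $\mathbb P_1\setminus B$ connected and locally connected at $\infty$, and branches exhausting $\mathbb C$ toward $\infty$. On $B$ define a continuous map $\tilde h:B\to\mathbb C^2$, holomorphic in the interior, whose projection $\pi\circ\tilde h$ routes a path inside $\Omega_1$ that realizes the prescribed $1$-jet at $0$, passes within $1/n$ of each $w_n$, and crosses the tube over $\overline W$ only inside $\Sigma$, that is, keeps its $E''$-coordinate within $\delta$ of $\sigma$ whenever its $E'$-coordinate lies in $\overline W$ (over $E'\setminus\overline W$ the path moves freely in the large set $\Omega_2$). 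Applying Proposition \ref{Prop1} to each coordinate produces an entire $F:\mathbb C\to\mathbb C^2$ with $F(0)=\tilde h(0)$, $F'(0)=\tilde h'(0)$ and $|F-\tilde h|<\epsilon$ on $B$, where $\epsilon:B\to\mathbb R^+$ is taken small and much smaller near the bridge and near $\partial\Omega_1$. Then $f:=\pi\circ F$ satisfies $f(0)=p$, $f'(0)=v$, $f(B)\subset\Omega_1$, and $f(B)$ is $2/n$-dense, so $\overline{f(\mathbb C)}=\overline\Omega_1$.

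The hard part is twofold. In (ii) the delicate point is precisely the dimension count that upgrades ``bounded derivative'' to ``one-dimensional complex subtorus'': one must exclude that $A$ is $3$- or $4$-real-dimensional, which is exactly where the calibration $\delta<\rho/3$ (no positive-dimensional fibre fits in a $\delta$-ball) and the surjectivity $\mathrm{pr}_1(A)=E'$ enter, after which the non-isogeny classification and the diameter estimate close the argument. In (i) the genuine difficulty is not the density or the jet, which Proposition \ref{Prop1} handles directly, but ensuring that the \emph{entire} image $f(\mathbb C)$, including $f(\mathbb C\setminus B)$ where Arakelyan gives no control, remains inside the open set $\Omega_1$; I expect to arrange this by taking $B$ to be a labyrinth whose complementary components are small disks on whose boundaries the guide already lies deep inside $\Omega_2$, so that the approximant carries those disks into $\Omega_1$ as well.
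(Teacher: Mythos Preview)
The paper does not prove Theorem \ref{Th1}; it is quoted verbatim from Winkelmann \cite{Win} and used as a black box in the proof of Theorem \ref{T2}. So there is no ``paper's own proof'' to compare against, and strictly speaking your proposal is supererogatory for the purposes of this paper.

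That said, your argument for (ii) is essentially Winkelmann's. The Liouville step giving affine-linearity of the lift, the identification of $\overline{f(\mathbb C)}$ as a coset of a closed real subtorus, the fibre argument over $W$ forcing $\dim_{\mathbb R}A=2$ (hence $A$ complex one-dimensional, since its Lie algebra then equals $\mathbb C b$), the non-isogeny classification into horizontal/vertical, and the exclusion of the horizontal case via the diameter condition $d_{E''}(\sigma(t),\sigma(t'))>2\delta$ --- all of this is the intended line and is carried out correctly.

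Your treatment of (i), however, is only a sketch, and you flag the real gap yourself: Arakelyan controls $F$ on $B$ but says nothing about $F(\mathbb C\setminus B)$, so the claim $f(\mathbb C)\subset\Omega_1$ is not yet justified. The labyrinth idea you mention (choosing $B$ so that the bounded complementary components are small and their boundaries are mapped deep into $\Omega_2$, then invoking the maximum principle componentwise on the lift) is indeed how Winkelmann closes this; but as written your proposal stops at ``I expect to arrange this,'' which is not a proof. If you want to include a self-contained argument rather than the citation, you would need to actually construct $B$ and the guide $\tilde h$ and verify the containment, which is the bulk of the work in \cite{Win}. For the purposes of the present paper the citation suffices.
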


Let $A$ be the union of $\overline{B_{\rho'}(\mathbb C, \gamma)}$ for all $\gamma\in \Gamma'$. 
Let $g: \mathbb C\setminus \{(-\infty, -1]\cup [1,+\infty)\} \to \{z\in \mathbb C:- \frac{\pi}{2}<\text{Im} z<\frac{\pi}{2}\}$ be the inverse of the biholomorphic function $\{z\in \mathbb C: -\frac{\pi}{2}<\text{Im} z<\frac{\pi}{2}\}\ni z\mapsto \sin (iz)\in \mathbb C\setminus \{(-\infty, -1]\cup [1,+\infty)\} $.

 For each $j\in \mathbb Z$ denote by $B_j=\sin ( iB_{2\rho'}(\mathbb C, j))$. We let $B=\cup_{j\in \mathbb Z} \bar B_j$. Denote by $h: B\to\mathbb C$ the continuous function given by $h(w)=s(g(w)-j)$ for any $j\in\mathbb Z$ and for any $w\in \bar B_j$. Note that $h$ is holomorphic in the interior of $B$. So, using Proposition \ref{Prop1} we deduce that there exists an entire function $F:\mathbb C\to \mathbb C$ such that
$$
|F(z)-h(z)|<\delta/3\;\text{for all}\; z\in B.
$$

\begin{lemma}  $\pi (z,F(\sin(iz))\in \Omega_1$ for all $z\in \mathbb C$.
\end{lemma}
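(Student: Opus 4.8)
The plan is to show that the point $\pi(z, F(\sin(iz)))$ always lands in $\Omega_1 = \Omega_2 \cup \Sigma$, by splitting $\mathbb{C}$ according to whether the first coordinate $\pi'(z)$ lies in $E' \setminus \overline{W}$ or in $\overline{W}$. First I would observe that $\overline{W} = \overline{B_{\rho'}(E', e)}$ pulls back under $\pi'$ to the set $A = \bigcup_{\gamma \in \Gamma'} \overline{B_{\rho'}(\mathbb{C}, \gamma)}$ introduced just before the lemma. Hence if $z$ is such that $\pi'(z) \notin \overline{W}$, then $(\pi'(z), \pi''(F(\sin(iz)))) \in (E' \setminus \overline{W}) \times E'' = \Omega_2 \subset \Omega_1$, and there is nothing more to prove. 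So the only work is in the case $z \in A$, say $z \in \overline{B_{\rho'}(\mathbb{C}, j)}$ for some $j \in \mathbb{Z}$ (possibly with a shift by an element of $2\pi i \mathbb{Z}$, which is harmless since $\sin(i\cdot)$ and the whole construction are $2\pi i$-periodic in the relevant sense, and in any case only the $\Gamma'$-class matters).

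In that case I need to check that $(x, y) := (\pi'(z), \pi''(F(\sin(iz)))) \in \Sigma$, i.e. that $x \in \overline{W}$ — which holds by assumption — and that $d_{E''}(y, \sigma(x)) < \delta$, where $\sigma = \pi'' \circ s$. The key computation chains together three estimates. Writing $w = \sin(iz) \in \overline{B_{2\rho'}(\mathbb{C}, j)}$ more precisely $w \in \bar B_j = \sin(iB_{2\rho'}(\mathbb{C},j))$, wait — one must be slightly careful: for $z \in \overline{B_{\rho'}(\mathbb{C},j)}$ we get $iz \in \overline{B_{\rho'}(\mathbb{C}, ij)}$, and since $\sin$ maps this into $\bar B_j$ (using that $B_j = \sin(iB_{2\rho'}(\mathbb{C},j))$ and $\rho' < 2\rho'$), we have $w \in \bar B_j \subset B$. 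Therefore the Arakelyan-type approximation applies at $w$: $|F(w) - h(w)| < \delta/3$. Next, $h(w) = s(g(w) - j)$ by definition of $h$ on $\bar B_j$; and since $w = \sin(iz)$ with $iz$ in the strip $-\tfrac{\pi}{2} < \operatorname{Im}(iz) < \tfrac{\pi}{2}$ (shrinking $\rho'$ if necessary, or noting $B_{\rho'}(\mathbb{C},j) \subset \{|\operatorname{Im}| < \pi/2\}$ near the real axis — this is where $0 < \rho'$ small is used), the inverse function $g$ recovers $g(w) = iz$ up to the branch, so $g(w) - j$ ranges over $B_{\rho'}(\mathbb{C}, 0)$ as $z$ ranges over $B_{\rho'}(\mathbb{C}, j)$. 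Hence $h(w) = s(g(w)-j)$ with $g(w) - j \in \overline{B_{\rho'}(\mathbb{C},0)}$, so by the defining property of $s$ we get $s(g(w)-j) \in B_{3\delta}(\mathbb{C},0)$, and moreover $\pi''(s(g(w)-j)) = \sigma(g(w)-j)$. The last link: since $x = \pi'(z) = \pi'(g(w)-j+j) = \pi'(g(w)-j)$ (as $\pi'$ kills $j \in \mathbb{Z} \subset \Gamma'$), we have $\sigma(x) = \pi''(s(g(w)-j)) = \pi''(h(w))$.

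Putting it together: $d_{E''}(y, \sigma(x)) = d_{E''}(\pi''(F(w)), \pi''(h(w))) \le |F(w) - h(w)| < \delta/3 < \delta$, where the first inequality uses that $\pi''$ is distance-nonincreasing from the euclidean metric on $\mathbb{C}$ to $d_{E''}$. This shows $(x,y) \in \Sigma \subset \Omega_1$, completing the case analysis and hence the lemma. The main obstacle I anticipate is bookkeeping the branch of $g = (\sin(i\cdot))^{-1}$ and making sure $z \in \overline{B_{\rho'}(\mathbb{C},j)}$ really forces $\sin(iz) \in \bar B_j$ inside the domain of $g$ with $g(\sin(iz)) - j \in \overline{B_{\rho'}(\mathbb{C},0)}$; this is purely a matter of choosing $\rho'$ small enough (so that the $\rho'$-balls around integers stay in the strip and avoid the branch cuts $(-\infty,-1]\cup[1,+\infty)$ after applying $\sin(i\cdot)$), and is implicit in Winkelmann's construction. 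Everything else is a short chain of inclusions and the single quantitative estimate $|F - h| < \delta/3$.
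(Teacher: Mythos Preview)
Your proof is correct and follows essentially the same approach as the paper's: reduce to the case $z\in A$, use the $2\pi i$-periodicity of $\sin(i\cdot)$ to assume $z$ lies in a ball around a real integer $j$, then compute $h(\sin(iz))=s(z-j-2k\pi i)$ and invoke the approximation $|F-h|<\delta/3$ to land in $\Sigma$. One small slip to clean up: you write ``$g(w)=iz$ up to the branch,'' but by definition $g$ inverts $z\mapsto\sin(iz)$, so $g(\sin(iz))=z$ (not $iz$); you in fact use the correct relation $g(w)-j=z-j\in\overline{B_{\rho'}(\mathbb C,0)}$ in the very next clause, so this is just a typo. Also, your worry about needing to shrink $\rho'$ is unnecessary: since $\rho'<\rho=\tfrac12\min_{\gamma\in\Gamma\setminus\{0\}}\|\gamma\|=\tfrac12<\tfrac{\pi}{2}$, the balls $B_{\rho'}(\mathbb C,j)$ around real integers automatically sit inside the strip $\{|\mathrm{Im}\,z|<\pi/2\}$.
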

\begin{proof}
It suffices to show that if $z\in A$, then $d_{E''}(\pi''(F\sin(iz)), \sigma(\pi'(z)))<\delta/3$. Indeed,
suppose that $z\in A$. Then there exist  integers $k,j\in \mathbb Z$ such that $|z-2k\pi i-j|\leq \rho'$. Therefore 
\begin{equation*}
\begin{split}
h(\sin(iz))&= h(\sin(i(z-2k\pi i)))=s(g(\sin(i(z-2k\pi i)))-j)\\
&=s(z-2k\pi i -j)=\sigma(\pi'(z)).
\end{split}
\end{equation*}
This implies that

\begin{equation*}
\begin{split}
d_{E''}(\pi''(F(\sin(iz))), \sigma(\pi'(z)))=|F(\sin(iz)-h(\sin(iz))|<\delta/3<\delta.
\end{split}
\end{equation*}
The lemma is proved.
\end{proof}

Now we let $f: \mathbb{C} \to (\mathbb C^*)^2$ be the holomorphic map given by 
$$f(z)=(\exp(z),\exp( F(\sin( iz))).$$
For each $k\in \mathbb N^*$ denote by $g_k: \Delta \to (\mathbb C^*)^2$ the holomorphic map defined by
$g_k(z):=f(kz)$ for all $z\in \Delta$. Since $g_k(0)=f(0)\in (\mathbb C^*)^2$ and ${g'}_k(0)=k f'(0)=k v$, where $v=f'(0)=(1, i F'(0)\exp(F(0))\ne 0$, ${g_k}$ is not normal and is not compactly divergent. 

Suppose that there exist a sequence $\{k_n\}\subset \mathbb N$, a sequence $\{p_n\}\Subset \Delta$, and a sequence $\{\rho_n\}\subset (0,+\infty) $ with $\rho_n \to 0^+ $ as $n\to \infty$ such that the sequence $\{\varphi_n\}$ defined by
$$
\varphi_n(\xi):=g_{k_n}(p_n+\rho_n \xi)=f(k_n p_n+k_n \rho_n \xi),
$$
for each $n\in \mathbb N^*$ and for $|\xi|< 1/\rho_n $, converges uniformly on every compact subset of $\mathbb C$ to a non-constant $ds^2_{FS}$-Brody curve $ \varphi: \mathbb C\to (\mathbb C^*)^2$, where $ds^2_{FS}$ is the Fubini-Study metric on $\mathbb P^2(\mathbb C)$.

Let $u,v: \mathbb C\to \mathbb C$ be holomorphic curves such that
 $$\varphi(z)=(\exp(u(z)), \exp(v(z)))$$ for all $z\in \mathbb C$. Since $\|\varphi'\|_{FS}$ is bounded, $T_r(\varphi)=O(r)$ (see \cite{C-H}), and thus the holomorphic functions $u$ and $v$ are both affine-linear.  
\begin{lemma}\label{lem1}
$\pi \circ (u(\mathbb C),v(\mathbb C)) \subset \Omega_1$.
\end{lemma}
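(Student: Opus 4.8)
The plan is to show that the limit curve $\varphi = (\exp(u), \exp(v))$, lifted to $\mathbb{C}^2$ via $\pi$, lands in $\Omega_1$ by comparing it with the model curve $f(z)=(\exp(z),\exp(F(\sin(iz))))$ from which the $\varphi_n$ are built. First I would record that, since $p_n \Subset \Delta$ and $\rho_n \to 0^+$ while $\varphi_n(\xi)=f(k_np_n + k_n\rho_n\xi)$ converges to a \emph{non-constant} curve, the affine parameters must stabilize: $k_n p_n \to q \in \mathbb{C}$ and $k_n\rho_n \to A \in \mathbb{C}^*$ (if $k_n\rho_n\to 0$ the limit would be constant, and boundedness of $\|\varphi'\|_{FS}$ together with non-constancy forces a finite nonzero limit along a subsequence; pass to that subsequence). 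Hence $u(\xi) = q + A\xi$ after composing the first coordinate, i.e. $\exp(u(\xi)) = \exp(q+A\xi)$, so $u(\mathbb{C}) = q + A\mathbb{C}$ is an affine line in $\mathbb{C}$ projecting onto all of $E'$; in particular for the second coordinate we get $\exp(v(\xi)) = \lim_n \exp(F(\sin(i(k_np_n+k_n\rho_n\xi))))$ locally uniformly.

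The heart of the argument is then a limiting version of the preceding Lemma. The preceding Lemma tells us that $\pi(z, F(\sin(iz))) \in \Omega_1$ for \emph{every} $z\in\mathbb{C}$, which amounts to the inclusion being cut out by the closed condition $d_{E''}\big(\pi''(F(\sin(iz))),\sigma(\pi'(z))\big) \le \delta$ whenever $\pi'(z)\in \overline{W}$, and being automatic (the $\Omega_2$ part) when $\pi'(z)\notin \overline{W}$. I would therefore fix $\xi_0 \in \mathbb{C}$, set $z_n = k_np_n + k_n\rho_n\xi_0 \to z_0 = q + A\xi_0$, and split into two cases. If $\pi'(z_0) \notin \overline{W}$, then $\pi(u(\xi_0),\ast) \in \Omega_2 \subset \Omega_1$ is immediate since the first coordinate alone places the point in $(E'\setminus\overline{W})\times E''$. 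If $\pi'(z_0)\in\overline{W}$, then since $\pi''(\exp(v(\xi_0))) = \lim_n \pi''(\exp F(\sin(iz_n)))$ and $\pi'(z_0) = \lim_n\pi'(z_n)$, I apply the estimate $d_{E''}(\pi''(F(\sin(iz_n))),\sigma(\pi'(z_n))) < \delta$ valid for all $n$ (this is exactly the content of the previous Lemma, valid for $z_n\in A$, and for $z_n$ near but not in $A$ one argues that $\pi'(z_n)$ near $\overline W$ still satisfies the required bound, or one simply notes the previous Lemma gives the membership $\pi(z_n,F(\sin(iz_n)))\in\Omega_1$ directly); passing to the limit and using continuity of $d_{E''}$ and of $\sigma$ gives $d_{E''}(\pi''(\exp v(\xi_0)),\sigma(\pi'(u(\xi_0)))) \le \delta$, hence the limit point lies in $\overline{\Sigma}\subset \overline{\Omega}_1$. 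A small care point: the previous Lemma as stated uses $\delta/3$ as the bound on $A$ and $\delta$ on the complement of $A$, so the limit lands in $\overline{\Omega}_1$ rather than $\Omega_1$; to get the open set $\Omega_1$ one uses that the uniform estimate was strictly $<\delta/3$ on $A$ and that for $z_0$ with $\pi'(z_0)\in\overline W$ one can retreat to $z_0$ lying over the open part, so strict inequality $<\delta$ is preserved in the limit.

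The main obstacle I anticipate is precisely this open-versus-closed issue: the natural limiting argument only yields $\pi(u(\mathbb{C}),v(\mathbb{C})) \subset \overline{\Omega}_1$, and promoting it to $\Omega_1$ requires exploiting the slack in Winkelmann's construction ($\rho' < \rho''$, $3\delta < \rho$, and the strict inequalities $|F - h| < \delta/3$) to show the limit curve cannot touch the boundary $\partial\Omega_1$. The clean way to handle this is to observe that by Theorem \ref{Th1}(ii)-type reasoning (or by the affine-linearity of $u,v$ already established) the limit curve $\varphi$, being Brody with image in $\overline\Omega_1$ after we prove the inclusion for the closure, must actually have image in $\overline\Omega_2 = (E'\setminus W)\times E''$; but that anticipates the next step of the paper, so within this Lemma I would keep to the direct continuity argument and simply record the inclusion into $\Omega_1$, noting that the estimate $<\delta/3$ (strict) on the relevant region survives the limit because only finitely much is lost. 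I expect the write-up to mirror almost verbatim the proof of the unnumbered Lemma above, with ``$z\in A$'' replaced by ``$z_n\to z_0$ with $z_n$ in a fixed neighborhood of $A$'' and a final appeal to continuity.
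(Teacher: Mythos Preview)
Your plan rests on first forcing $k_np_n\to q\in\mathbb C$ and $k_n\rho_n\to A\in\mathbb C^*$, but this is neither justified here nor how the paper proceeds. The case analysis on $k_n\rho_n$ and $k_np_n$ is precisely the content of the \emph{next} lemma (Lemma~\ref{lem2}), and there the possibility $k_np_n\to\infty$ is only eliminated by contradiction. So you cannot invoke it as a preliminary step in the present lemma. Moreover, your limiting argument, even if the parameters did stabilize, gives only $d_{E''}(\cdot,\cdot)\le\delta$, which is the open-versus-closed obstacle you yourself flag and never actually resolve.

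The idea you are missing is that the function $z\mapsto\sin(iz)$ is $2\pi i$-periodic (since $\sin$ is $2\pi$-periodic). The paper exploits this as follows. Fix $\xi$ with $u(\xi)\in A$; the case $u(\xi)\notin A$ is trivial since then $\pi'(u(\xi))\notin\overline W$. From the first-coordinate convergence $\exp(k_np_n+k_n\rho_n\xi)\to\exp(u(\xi))$ one extracts integers $l_n$ with $k_np_n+k_n\rho_n\xi-2l_n\pi i\to u(\xi)$, \emph{without} any claim about $k_np_n$ or $k_n\rho_n$ separately. Periodicity then gives
\[
F\bigl(\sin(i(k_np_n+k_n\rho_n\xi))\bigr)=F\bigl(\sin(i(k_np_n+k_n\rho_n\xi-2l_n\pi i))\bigr)\longrightarrow F(\sin(iu(\xi))),
\]
so $\exp(v(\xi))=\exp(F(\sin(iu(\xi))))$, i.e.\ $\pi''(v(\xi))=\pi''(F(\sin(iu(\xi))))$ \emph{exactly}. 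Now the estimate $|F-h|<\delta/3$ on $B$ together with the identity $h(\sin(iu(\xi)))=\sigma(\pi'(u(\xi)))$ (same computation as in the unnumbered lemma) yields the strict inequality $d_{E''}(\pi''(v(\xi)),\sigma(\pi'(u(\xi))))<\delta/3<\delta$. This places the point in the open set $\Sigma\subset\Omega_1$ and dissolves your open/closed worry entirely. The crux is that the $2\pi i$-ambiguity in lifting $\exp$ is harmless because $\sin(i\cdot)$ and $\pi''$ both kill it; you should use this exact identification rather than a limiting inequality.
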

\begin{proof}
Fix an arbitrary $z\in \mathbb C$.  We will show that $\pi \circ (u(z),v(z))\in \Omega_1$. It suffices to prove that this assertion holds for the case $u(z)\in A$. Indeed, if $u(z)\in A$, then there exist integers $k^*,j^*\in \mathbb Z$ such that $|u(z)-2k^*\pi i -j^*|\leq \rho'$. 

Since $\exp(k_np_n+k_n\rho_n z)\to \exp(u(z)) $ as $n\to \infty$, there is a sequence of integers $\{l_n\}\subset \mathbb Z$ such that $k_np_n+k_n\rho_n z-2 l_n \pi i \to u(z)$  as $n\to \infty$. Therefore 
\begin{equation*} 
\begin{split}
\exp(v(z))&=\lim_{n\to \infty} \exp(F(\sin(i(k_np_n+k_n\rho_n z))))\\
&=\lim_{n\to \infty} \exp(F(\sin(i(k_np_n+k_n\rho_n z-2l_n \pi i))))\\
&= \exp(F(\sin(iu(z))))).
\end{split}
\end{equation*}
Hence there is an integer $l\in \mathbb Z$ such that $F(\sin(i u(z)))=v(z)+2l\pi i$.

Now we see that 
\begin{equation*} 
\begin{split}
h(\sin(iu(z)))&=h(\sin(i(u(z)-2k^*\pi i)))= s(g(\sin(i(u(z)-2k^*\pi i)))-j^*)\\
                   & = s(u(z)-2k^*\pi i-j^*)=\sigma(\pi'(u(z))).
\end{split}
\end{equation*}
Thus we conclude that
\begin{equation*}
\begin{split}
d_{E''}(\pi''(v(z)), \sigma(\pi'(u(z))))&=d_{E''}(\pi''(F(\sin(i u(z)))), \sigma(\pi'(u(z))))\\
&=|F(\sin(iz)-h(\sin(iz))|<\delta,
\end{split}
\end{equation*}
and hence $(\pi'(u(z)), \pi''(v(z)))\in \Sigma$. The proof is complete.

\end{proof}

\begin{lemma}\label{lem2} $\pi \circ (u(\mathbb C),v(\mathbb C))\cap (\Omega_1\setminus\overline{\Omega}_2)\not= \emptyset$.
\end{lemma}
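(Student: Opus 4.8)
The plan is to argue by contradiction: suppose $\pi\circ(u(\mathbb C),v(\mathbb C))\subset\overline{\Omega}_2$. Combined with Lemma \ref{lem1} and the fact that $\varphi$ has bounded derivative (so, lifting through $\exp$, the affine-linear map $\psi:=\pi\circ(u,v):\mathbb C\to T$ is a non-constant holomorphic curve into $\overline{\Omega}_1$ with bounded derivative with respect to $h$), Winkelmann's Theorem \ref{Th1}(ii) applies and tells us that $\psi(\mathbb C)\subset\overline{\Omega}_2$ and that $\psi$ is affine-linear with $\overline{\psi(\mathbb C)}$ a closed analytic subset of $T$. (In fact $u,v$ are already known to be affine-linear, so $\psi$ is automatically affine-linear; the content we need from Theorem \ref{Th1}(ii) is really just the structure of the image.) Then I would invoke part (i) of Theorem \ref{Th1}, or rather the density statement $\overline{\Omega}_1=\overline{f(\mathbb C)}$ built into Winkelmann's construction, to locate a point of the original curve $f$ — equivalently a point $\pi(k_np_n+k_n\rho_n\xi_0,\,F(\sin(i(k_np_n+k_n\rho_n\xi_0))))$ for suitable $\xi_0$ and large $n$ — that lies in $\Omega_1\setminus\overline{\Omega}_2$, and then pass this to the limit $\psi$.

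More concretely, here is the mechanism I expect to use. The set $\Omega_1\setminus\overline{\Omega}_2$ is a nonempty open subset of $T$ (it contains points $(x,y)$ with $x$ in the open ball $W$ and $d_{E''}(y,\sigma(x))<\delta$). By construction of $F$ via Proposition \ref{Prop1}, the curve $z\mapsto\pi(z,F(\sin(iz)))$ actually enters this region: picking $x$ with $\pi'(x)\in W$ and using the points $t,t'\in B_{\rho'}(\mathbb C,0)$ with $d_{E''}(\sigma(t),\sigma(t'))>2\delta$ together with $\mathrm{diam}(s(B_{\rho'}(\mathbb C,0)))>2\delta$, one sees the image of $F\circ\sin\circ(i\,\cdot\,)$ over a ball above $W$ has diameter exceeding $2\delta-2(\delta/3)>\delta$, so it cannot stay inside the "$<\delta/3$-tube" that defines the over-$\overline{\Omega}_2$ part; hence some value falls in $\Omega_1\setminus\overline{\Omega}_2$. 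Now use that $\exp(k_np_n+k_n\rho_n\xi)\to\exp(u(\xi))$ uniformly on compacta, that the $\exp$-preimages differ by elements of $2\pi i\mathbb Z$, and the same for the second coordinate, exactly as in the proof of Lemma \ref{lem1}, to transport this "escaping" behaviour from $f$ to $\varphi$, and hence to $\psi=\pi\circ(u,v)$. This contradicts $\psi(\mathbb C)\subset\overline{\Omega}_2$ and proves the lemma.

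The main obstacle, I expect, is the last transport step: making rigorous that the limit curve $\psi$ must meet $\Omega_1\setminus\overline{\Omega}_2$ and not merely its closure. Because $\Omega_1\setminus\overline{\Omega}_2$ is open, it suffices to produce a single point $\xi_0$ and a subsequence along which $\psi_n:=\pi\circ(k_np_n+k_n\rho_n\xi,\,F(\sin(i(k_np_n+k_n\rho_n\xi))))$ evaluated at $\xi_0$ lies in a fixed compact subset of $\Omega_1\setminus\overline{\Omega}_2$; the reparametrization $\xi\mapsto k_np_n+k_n\rho_n\xi$ must be shown to cover, in the limit, enough of $\mathbb C$ (equivalently, $k_n\rho_n$ stays bounded away from $0$ — which follows because $v\ne0$ forces $\varphi'\not\equiv 0$, and $\varphi'(0)=\lim k_n\rho_n f'(k_np_n)$ with $f'$ bounded below near relevant points) to hit the preimage of $W$ in the first coordinate. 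I would handle this by first normalizing so that $k_np_n+k_n\rho_n\cdot 0$ converges (mod $\Gamma$) into the relevant chart, translating by an element of $\Gamma$ if necessary, and then choosing $\xi_0$ in the limiting parameter domain corresponding to the escaping point found above; the uniform convergence on the compact disc containing $\xi_0$ then finishes the argument. A secondary, purely bookkeeping, difficulty is keeping the two metrics straight — $ds^2_{FS}$ on $(\mathbb C^*)^2\subset\mathbb P^2(\mathbb C)$ for the hypothesis on $\varphi$ versus the flat metric $h$ on $T$ for Theorem \ref{Th1}(ii) — but since $u,v$ are already forced to be affine-linear by the $T_r(\varphi)=O(r)$ estimate, $\psi$ automatically has bounded $h$-derivative and the hypotheses of Theorem \ref{Th1}(ii) are met without any delicate metric comparison.
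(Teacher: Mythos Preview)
Your overall strategy is in the right direction, but there is a genuine gap at the crucial step where you claim $k_n\rho_n$ stays bounded away from $0$. You write that this follows from $\varphi'(0)=\lim k_n\rho_n f'(k_np_n)$ ``with $f'$ bounded below near relevant points''. First, you would need $\|f'\|$ bounded \emph{above}, not below, to force $k_n\rho_n f'(k_np_n)\to 0$. Second, and more seriously, $f'$ is not globally bounded (the second component involves $\cos(iz)\,F'(\sin(iz))\exp(F(\sin(iz)))$), and nothing you have said prevents $k_np_n$ from running off to a region where $\|f'\|$ is large. The paper handles this by a case analysis exploiting the $2\pi i$-periodicity of $f$ (both $\exp$ and $z\mapsto\sin(iz)$ are $2\pi i$-periodic): if $k_n\rho_n\to 0$, one splits according to whether $\{k_np_n\}$ is bounded, whether $\mathrm{Re}(k_np_n)\to\pm\infty$, or whether $|\mathrm{Re}(k_np_n)|$ stays bounded with $|\mathrm{Im}(k_np_n)|\to\infty$. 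In the first and last cases periodicity reduces $k_np_n$ modulo $2\pi i\mathbb Z$ to a compact set on which $\|f'\|$ \emph{is} bounded, forcing $\varphi'\equiv 0$; in the middle cases the first component $\exp(k_np_n+k_n\rho_n\xi)$ tends to $0$ or $\infty$, contradicting $\varphi(\mathbb C)\subset(\mathbb C^*)^2$. Without this periodicity argument, your sketch does not close.

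Your second ingredient --- the elaborate ``escaping'' argument using $\mathrm{diam}(s(B_{\rho'}(\mathbb C,0)))>2\delta$ and the points $t,t'$ to force the \emph{second} coordinate out of the tube --- is unnecessary and is not how the paper proceeds. Once $k_n\rho_n\not\to 0$, the affine images $k_np_n+k_n\rho_n\xi$ with $|\xi|\leq R$ (for some fixed $R$) must meet the smaller lattice-neighborhood $A'=\bigcup_{\gamma\in\Gamma'}\overline{B_{\rho'/2}(\mathbb C,\gamma)}$; extracting a limit gives $u(\xi^*)\in A'$ for some $\xi^*$. Then $\pi'(u(\xi^*))$ lies in the \emph{interior} of $W=B_{\rho'}(E',e)$, so $(\pi'(u(\xi^*)),\pi''(v(\xi^*)))\notin\overline{\Omega}_2=(E'\setminus W)\times E''$, and combined with Lemma~\ref{lem1} this gives $\pi(u(\xi^*),v(\xi^*))\in\Omega_1\setminus\overline{\Omega}_2$ directly. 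There is no need to invoke Theorem~\ref{Th1} inside this lemma (it is used afterwards, together with Lemmas~\ref{lem1} and~\ref{lem2}, to finish Theorem~\ref{T2}), nor to analyze the second coordinate at all.
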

\begin{proof}
Now we consider two following cases.
\smallskip

\underbar{Case (\romannumeral1)}: $k_n\rho_n \not \to 0$ as $n\to \infty$. Without loss of generality, we may assume that $k_n\rho_n\gtrsim 1$ for all $n\in \mathbb N$. Therefore there are a positive integer $n_0$ and a positive real number $R>0$ such that for each $n\geq n_0$ there exists a point $\xi_n \in \mathbb C$ with $|\xi_n|\leq R$ such that $k_n p_n+k_n \rho_n \xi_n \in A'\subset A$, where $A'$ is the union of $\overline{B_{\rho'/2}(\mathbb C, \gamma)}$ for all $\gamma\in \Gamma'$. Without loss of generality, we may assume that $\xi_n \to \xi^*\in \overline{B_R(\mathbb C,0)}$ as $n\to \infty$. Since $\exp(k_n p_n+k_n \rho_n \xi)$ converges uniformly on $\overline{B_R(\mathbb C,0)}$ to $\exp(u(\xi))$, there exists a sequence $\{l_n\}\subset \mathbb N$ such that  $k_n p_n+k_n \rho_n \xi_n+ 2l_n\pi i \to u(\xi^*)\in A'$ as $n\to \infty$. Thus by Lemma \ref{lem1}, $\pi(u(\xi^*),v(\xi^*))\in \Omega_1\setminus\overline{\Omega}_2 $ and hence $\pi \circ (u(\mathbb C),v(\mathbb C))\cap (\Omega_1\setminus\overline{\Omega}_2)\not= \emptyset$.
 
\smallskip

\underbar{Case (\romannumeral2)}: $k_n\rho_n \to 0$ as $n\to \infty$.  
Consider two following subcases.
\smallskip

\underbar{Subcase (\romannumeral2.1)}: $\{k_np_n\} \Subset \mathbb C$. With no loss of generality, we may assume that there exist a complex number $\alpha \in \mathbb C$ and a positive real number $R>0$ such that  $|k_np_n- \alpha| <R$ for all $n\in \mathbb N^*$.  Then we get
$$
\sup_{|\xi|\leq1} \|{\varphi_n}'(\xi)\|=k_n\rho_n\sup_{|\xi|\leq1} \|{f}'(k_np_n+k_n\rho_n\xi)\| \leq k_n\rho_n\sup_{|t-\alpha|\leq R+1} \|f'(t)\| \to 0
$$
as $n\to \infty$. This implies that $\varphi' \equiv 0$, and thus $\varphi$ is constant. It is impossible.

\smallskip

\underbar{Subcase (\romannumeral2.2)}: $k_np_n \to \infty$ as $n\to \infty$. 
If $\text{Re}(k_np_n)\to -\infty$, then 
\begin{equation*}
\begin{split}
|\exp(k_np_n+k_n\rho_n\xi)|&=\exp(\text{Re}(k_np_n)+k_n\rho_n\xi)\\
&=\exp(\text{Re}(k_np_n)) \exp(k_n\rho_n\xi) \to 0 
\end{split}
\end{equation*}
 as $n\to \infty$ for each $\xi \in \mathbb C$. Thus  $\varphi^1(\xi)=0$ for all $\xi \in \mathbb C$, where $\varphi^1$ is the first component of the holomorphic map $\varphi$. It is not possible.

If  $\text{Re}(k_np_n)\to +\infty$, then $|\exp(k_np_n+k_n\rho_n\xi)|=\exp(\text{Re}(k_np_n)+k_n\rho_n\xi)=\exp(\text{Re}(k_np_n)) \exp(k_n\rho_n\xi) \to +\infty $ as $n\to \infty$ for each $\xi \in \mathbb C$. This is a contradiction.

If  $|\text{Re}(k_np_n)|\lesssim 1 $ for all $n\in \mathbb N$. In this case we may assume that   there exists a positive number $R>0$ and integers $l_n\in \mathbb Z$, $n=1,2,\ldots$, such that
$$ 
|k_np_n+2l_n\pi i|\leq R 
$$
for every $n\in \mathbb N$. We note that $f(k_np_n+k_n\rho_n\xi)=f(k_np_n+2l_n \pi+ k_n\rho_n\xi)$ for all $n\in \mathbb N$ and for $\xi \in \mathbb C$.

Therefore we get
$$
\sup_{|\xi|=1} |{\varphi_n}'(\xi)|= k_n\rho_n\sup_{|\xi|=1}|f'(k_np_n+2 l_n \pi  i+k_n\rho_n\xi)|\leq k_n\rho_n \sup_{|t|<R+1} |f'(t)|\to 0 
$$
as $n\to \infty$. Hence $\varphi$ is constant, which is impossible.

\end{proof}

Now it follows that Lemma \ref{lem2} together with Theorem \ref{Th1} and Lemma \ref{lem1}  completes the proof of Theorem \ref{T2}.

\begin{remark}
By arguments as above there do not exist a sequence $\{k_n\}\subset \mathbb N$, a sequence $\{p_n\}\Subset \Delta$, and a sequence $\{\rho_n\}\subset (0,+\infty) $ with $\rho_n \to 0^+ $ as $n\to \infty$ such that the sequence $\{\varphi_n\}$ defined by
$$
\varphi_n(\xi):=g_{k_n}(p_n+\rho_n \xi)=f(k_n p_n+k_n \rho_n \xi),
$$
for each $n\in \mathbb N^*$ and for $|\xi|< 1/\rho_n $, converges uniformly on every compact subset of $\mathbb C$ to a non-constant $ds^2_{FS}$-Brody curve $ \varphi: \mathbb C\to (\mathbb C^*)^2$ given by $\varphi(z)=(\exp(u(z)), \exp(v(z)))$ for all $z\in \mathbb C$. However, by \cite[Th\'{e}or\`{e}me 1.12, p. 440]{Ber} there exist sequences $\{A_n\}, \{B_n\}\subset \mathbb C$ such that $f(A_n z+B_n)$ converges uniformly on every compact subset of $\mathbb C$ to a non-constant curve $\varphi$ in $(\mathbb C^*)^2$ given by $\varphi(z)=(\exp(az+b), \exp(cz+d)))$ for all $z\in \mathbb C$, where $a,b,c,d\in \mathbb C$ with $|a|^2+|c|^2\ne 0$.
\end{remark}

\section{Normal families of holomorphic mappings in several complex variables}

First of all, we recall some definitions.
\begin{definition}
A family $\mathcal F$ of holomorphic maps from a complex space $X$ to
a complex space $Y$ is said to be normal if $\mathcal F$ is relatively
compact in $\mathrm{Hol}(X,Y)$ in the compact-open topology.
\end{definition}
\begin{definition}
Let $X$, $Y$ be complex spaces and $\mathcal F \subset \mathrm{Hol}(X,Y)$.

i) A sequence $\big\{f_j\big\} \subset \mathcal F$ is compactly divergent
if for every compact set $K \subset X$ and for every compact set 
$L \subset Y,$ there is a number $j_0 = j(K,L)$ such that 
$f_j(K) \cap L = \emptyset$ for all $j \geq j_0$.

ii) The family $\mathcal F$ is said to be not compactly divergent if
$\mathcal F$ contains  no compactly divergent subsequences.
\end{definition}

To prove Theorem \ref{T1}, we need the following lemma (cf. see \cite[lemme 2.2, p. 431]{Ber}).

\begin{lemma}\label{L6} Let $(X,d)$ be a complete metric space and let $\varphi: X\to \mathbb R^+$ be a locally bounded function. Let $\epsilon>0$ and let $\tau>1$. Then, for all $a\in X$ satisfying  $\varphi(a)>0,$ there exists $\tilde a \in X$ such that

\begin{enumerate}
\item[(i)] $d(a,\tilde a )\leq \frac{\tau}{\epsilon\varphi(a) (\tau-1)}$
\item[(ii)] $\varphi(\tilde a)\geq \varphi(a)$
\item[(iii)] $\varphi(x)\leq \tau \varphi(\tilde a)$ if $d(x,\tilde a)\leq \frac{1}{\epsilon \varphi(\tilde a)}$.
\end{enumerate}
\end{lemma}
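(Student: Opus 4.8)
The statement to prove is Lemma \ref{L6}, which is a version of the Brody–Zalcman "rescaling" or "renormalization" lemma in the abstract metric-space setting (essentially Zalcman's lemma machinery, following Berteloot). Here is how I would attack it.

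\textbf{Approach.} The plan is to argue by contradiction via an iterative construction. Suppose no such $\tilde a$ exists. Starting from $a_0 = a$, I would build a sequence $\{a_k\}$ as follows: given $a_k$ with $\varphi(a_k) \geq \varphi(a) > 0$, the failure of (iii) for $a_k$ produces a point $a_{k+1}$ with $d(a_{k+1}, a_k) \leq \frac{1}{\epsilon \varphi(a_k)}$ and $\varphi(a_{k+1}) > \tau \varphi(a_k)$. (Condition (ii), $\varphi(a_{k+1}) \geq \varphi(a_k) \geq \varphi(a)$, then holds automatically since $\tau > 1$.) Iterating, $\varphi(a_k) > \tau^k \varphi(a)$, so $\frac{1}{\epsilon \varphi(a_k)} < \frac{1}{\epsilon \varphi(a) \tau^k}$, and hence
\[
d(a_k, a_{k+1}) < \frac{1}{\epsilon \varphi(a)}\cdot \frac{1}{\tau^k}.
\]
Summing the geometric series gives $d(a_0, a_k) < \frac{1}{\epsilon \varphi(a)} \cdot \frac{\tau}{\tau - 1}$ for all $k$, and more generally $\{a_k\}$ is Cauchy. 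By completeness, $a_k \to a_\infty$ for some $a_\infty \in X$, and $d(a, a_\infty) \leq \frac{\tau}{\epsilon \varphi(a)(\tau-1)}$.

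\textbf{Deriving the contradiction.} The key remaining point is that $\varphi(a_k) \to \infty$ (since $\varphi(a_k) > \tau^k \varphi(a)$ and $\tau > 1$), while the $a_k$ converge to $a_\infty$. This contradicts the local boundedness of $\varphi$: on some neighborhood of $a_\infty$, $\varphi$ is bounded, yet that neighborhood contains $a_k$ for all large $k$, on which $\varphi$ is unbounded. Hence the assumption was false: there must exist a point — call it $\tilde a$ — satisfying (iii). One must then also check (i) and (ii) for that $\tilde a$; but $\tilde a$ arises as (the limit would not literally be the point, so more carefully) — actually the cleaner formulation is: among the candidates, pick $\tilde a = a_k$ for the $k$ at which the construction terminates, i.e. the first $k$ for which (iii) holds. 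Then (ii) holds because $\varphi(a_k)\geq \varphi(a)$, and (i) holds because $d(a,a_k)\leq \sum_{i<k} d(a_i,a_{i+1}) < \frac{\tau}{\epsilon\varphi(a)(\tau-1)}$. Since the construction cannot go on forever (the completeness-plus-local-boundedness argument shows an infinite sequence is impossible), such a terminating $k$ exists.

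\textbf{Main obstacle.} The construction itself is routine; the only subtlety is organizing the contradiction correctly so that one genuinely extracts a usable point $\tilde a$ rather than merely a limit point (at the limit point $a_\infty$ itself, $\varphi$ need not be large, so $a_\infty$ is not the desired point — the point we want is one of the $a_k$ where the process stops). So the logical structure I would use is: \emph{either} the inductive process terminates at some finite stage $k$, in which case $a_k$ is the desired $\tilde a$ and (i)--(iii) are verified by the partial geometric sum as above; \emph{or} it runs forever, producing a Cauchy sequence with $\varphi(a_k)\to\infty$ converging to a point near which $\varphi$ is unbounded, contradicting local boundedness. I expect the write-up to be short, with the bookkeeping of the geometric-series bound $\sum_{i\geq 0}\tau^{-i} = \tau/(\tau-1)$ being the one place to be careful about indices.
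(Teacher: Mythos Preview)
Your argument is correct and is precisely the standard proof of this renormalization lemma. Note, however, that the paper does not supply its own proof of Lemma~\ref{L6}: it merely states the lemma and cites \cite[Lemme~2.2, p.~431]{Ber}. The proof you outline---iteratively choosing $a_{k+1}$ in the ball $B\big(a_k,\tfrac{1}{\epsilon\varphi(a_k)}\big)$ with $\varphi(a_{k+1})>\tau\varphi(a_k)$ whenever (iii) fails for $a_k$, bounding $d(a,a_k)$ by the geometric series $\tfrac{1}{\epsilon\varphi(a)}\sum_{j\geq 0}\tau^{-j}=\tfrac{\tau}{\epsilon\varphi(a)(\tau-1)}$, and then using completeness together with local boundedness to rule out an infinite sequence---is exactly the argument in Berteloot's Lemme~2.2, so there is nothing to compare beyond that.

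Your closing observation is also on point: the desired $\tilde a$ is one of the $a_k$ at which the process halts, not the limit $a_\infty$ (which only plays a role in the contradiction). The bookkeeping you flag, namely that $\varphi(a_k)\geq\tau^k\varphi(a)$ gives $d(a_k,a_{k+1})\leq \tfrac{1}{\epsilon\varphi(a_k)}\leq\tfrac{1}{\epsilon\tau^k\varphi(a)}$ (with equality allowed at $k=0$), sums exactly to the bound in (i), so (i) and (ii) are immediate for the terminating $a_k$.
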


\begin{proof}[Proof of Theorem \ref{T1}]\

\noindent
{\bf ($\Rightarrow$)}\ Consider two cases

\noindent
{\bf Case 1.} The family $\mathcal{F}$ is compactly divergent.

Then there is a sequence $\{f_j\}\subset \mathcal{F}$ such that  $\{f_j\}$ is compactly divergent. Take $p_0\in \Omega$ and $r_0>0$ such that $B(p_0,r_0)\Subset \Omega$. Take $p_j=p_0$ for all $j\geq 1$ and $\rho_j>0$ for all $j\geq 1$ such that $\rho_j\to 0^+$ as $j\to \infty$ and 
$$
g_j(\xi)=f_j(p_j+\rho_j\xi)\; \text{for all}\; j\geq 1.
$$
Note that $g_j$ is defined on 
$$
\{\xi \in \mathbb C: |\xi|\leq R_j:=\frac{1}{\rho_j} dist(p_0, \partial \Omega)\}.
$$
Assume that $K$ is a compact subset of $\mathbb C$ and $L$ is a compact subset of $M$. Then there exists $j_0\geq 1$ such that $p_j+\rho_j K\subset B(p_0,r_0)$ for all $j\geq j_0$. This implies that $g_j(K)\subset f_j(\bar B(p_0,r_0))$ for each $j\geq j_0$.

Since the sequence $\{f_j\}$ is compactly divergent, there exists $j_1>j_0$ such that 
$ f_j(\bar B(p_0,r_0))\cap L=\emptyset$ for all $j\geq j_1$. Thus $g_j(K)\cap L=\emptyset$ for all $j\geq j_1$. This means that $\{g_j\}$ is compactly divergent.

\noindent
{\bf Case 2.} The family $\mathcal{F}$ is not compactly divergent.

By Lemma 2.6 in \cite{Do}, there exist sequences $\{f_k\}\subset \mathcal{F}$, $\{a_k\}\Subset \Omega$ such that 
$$
|{f_k}'(a_k)|_E\geq k^3  
$$
for all $k\geq 1$. For simplicity, we denote by $|.|:=|.|_E$. Without loss of generality, we may assume that $a_k\to a_0\in \Omega$ as $k\to \infty$ and $\overline{B(z_0,r)}\subset \Omega$ for some $r>0$. We also can assume that $a_k\in \overline{B(z_0,r)}$ for all $k\geq 1$. We denote by $d_E$ the distance induced by the hermitian metric $E$. 

Now by applying Lemma \ref{L6} to $X=\overline{B(z_0,r)}$, $\varphi:=|{f_k}'|_E, a=a_k$ and $\tau=1+\frac{1}{k},$ it implies that there exists $\tilde a=: z_k$ such that 
\begin{itemize}
\item[(i)] $|z_k-a_k|\leq \frac{\tau}{\epsilon \varphi(a) (\tau-1)}\leq \frac{2 k^2}{|{f_k}'(a_k)|}\leq \frac{2}{k}$;
\item[(ii)] $|{f_k}'(z_k)|=\varphi(\tilde a)\geq \varphi(a)=|{f_k}'(a_k)| \geq k^3$;
\item[(iii)] $|{f_k}'(z)|=\varphi(z)\leq \tau \varphi(\tilde a)=(1+\frac{1}{k})|{f_k}'(z_k)|$ for all $|z-z_k|\leq \frac{1}{\epsilon \varphi(\tilde a)}=\frac{k}{|{f_k}'(z_k)|}$.
\end{itemize}
Let $\rho_k:=\frac{1}{|{f_k}'(z_k)|}$ and let $g_k(z):=f_k(z_k+\rho_k z)$. By $(i),$ we have $z_k\to z_0$ as $k\to\infty$. Therefore $g_k$ is defined on $\Delta_k:=\{z\in \mathbb C: |z|<k\}$ for all $k$ big enough. Moreover, by $(ii),$ we get $k\rho_k\leq \frac{1}{k^2}$. Now because of $(iii),$ we obtain 
$$
|{g_k}'(z)|=\rho_k |{f_k}'(z_k+\rho_k z)| \leq (1+\frac{1}{k})\; \text{for all}\; z\in \Delta_k.
$$ 
So the $g_k$ are holomorphic on larger and larger discs in $\mathbb C$ and they have bounded derivatives. Thus the family $\{g_k\}$ is equicontinuous. If the family $\{g_k\}$ is not compactly divergent, by a result of Wu \cite[Lemma 1.1.iii]{Wu}, it is normal. This implies that there exists a subsequence $\{g_{k_j}\}\subset \{g_k\}$ such that $\{g_{k_j}\}$ converges uniformly on any compact subset of $\mathbb C$ to a holomorphic map $g:\mathbb C\to M$. It is easy to see that 
$$
|g'(0)|=\lim_{j\to \infty} |{g_{k_j}}'(0)|=\lim_{j\to \infty}\rho_{k_j} |{f_{k_j}}'(z_{k_j})|=1.
$$
Moreover $|g'(z)|=\lim_{j\to \infty} |{g_{k_j}}'(z)|=\lim_{j\to \infty}\rho_{k_j} |{f_{k_j}}'(z_{k_j}+\rho_{k_j} z)|\leq \lim_{j\to\infty}(1+\frac{1}{k_j})=1$ for all $z\in \mathbb C$. This implies that $g$ is a non-constant $E$-Brody curve in $M$. 

\noindent
{\bf ($\Leftarrow $)}\  Suppose that the family $\mathcal{F}$ is normal. 

Now we consider two cases.

\noindent
{\bf Case 1.} The sequence $\{g_j\}$ converges uniformly on any compact subset of $\mathbb C$ to a non-constant holomorphic map $g:\mathbb C\to M$. 

Take $r_0>0$ such that $B(p_0,r_0)\Subset \Omega$. Without loss of generality, we may assume that $\{p_j\}\subset B(p_0,r_0) $. Put $K_0=\overline{B(p_0,r_0)}\subset \Omega$. Since $\mathcal{F}$ is normal, by \cite[Lemma 2.6 (i), p.472]{Do}, there exists a constant $N>0$ such that 
$$
\sup_{p\in K_0}|f'(p)|\leq N \; \text{for each}\; f\in \mathcal{F}.
$$
Fix $\xi\in \mathbb C$. Then $p_j+\rho_j \xi\in K_0$ for $j$ large enough. Hence
$$
|{g_j}'(\xi)|=\rho_j |{f_j}'(p_j+\rho_j \xi)| \leq \rho_j N.
$$

Taking the limit, we obtain 
$$
|{g}'(\xi)|=\lim_{j\to \infty} |{g_j}'(\xi)|=0.
$$
This implies that $g$ is constant. This is impossible. 

\noindent
{\bf Case 2.} The sequence $\{g_j\}$ is compactly divergent. 

Since the family $\mathcal{F}$ is normal, without loss of generality, we may assume that the sequence $\{f_j\}$ converges uniformly on any compact subset of $\Omega$ to $f\in \mathrm{Hol}(\Omega, M)$. For $\xi\in \mathbb C,$ we have
$$
g_j(\xi)=f_j(p_j+\rho_j \xi) \to f(p_0)\in M
$$
since $\rho_j\to 0^+$ as $j\to\infty$. This implies that $\{g_j\}$ is not compactly divergent. This is a contradiction.

\end{proof}

In order to prove Theorem \ref{T3}, we need the following lemmas.

\begin{lemma}\label{lem4} Let $Z$ be a complex manifold. Let $S$ be a complex hypersurface of a complex space $X$. If a sequence $\{\varphi_n\}\subset \mathrm{Hol}(Z,X\setminus S))$ converges uniformly on every compact subset of $Z$ to a mapping $\varphi \in \mathrm{Hol}(Z,X)$, then either $\varphi(Z)\subset X\setminus S$ or $\varphi(Z)\subset S$.
\end{lemma}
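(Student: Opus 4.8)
My plan is to show that the set $A:=\{z\in Z:\varphi(z)\in S\}$ is both open and closed in $Z$; granting that $Z$ is connected (otherwise one runs the argument on each connected component, and the dichotomy is then understood componentwise), this forces $A=\emptyset$ or $A=Z$, which is precisely the claimed alternative. Closedness of $A$ is free: $S$ is closed in $X$ and $\varphi$ is continuous, so $A=\varphi^{-1}(S)$ is closed. All the real content lies in proving that $A$ is open, and for this I would use a Hurwitz-type argument.

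So I would fix $z_0\in A$, put $p_0:=\varphi(z_0)\in S$, and use that $S$ is a complex hypersurface to get an open neighbourhood $U$ of $p_0$ in $X$ together with a holomorphic function $h\colon U\to\mathbb C$, not a zero divisor near $p_0$ (in particular not identically zero on any branch of $U$), with $S\cap U=h^{-1}(0)$. By continuity of $\varphi$ and openness of $U$ I can choose a coordinate ball $B\ni z_0$ in $Z$ with $\varphi(\overline B)\subset U$. Since $\varphi(\overline B)$ is a compact subset of the open set $U$ and $\varphi_n\to\varphi$ uniformly on $\overline B$, we have $\varphi_n(\overline B)\subset U$ for all large $n$. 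For those $n$ the function $h\circ\varphi_n$ is holomorphic on $B$ and nowhere vanishing there, because $\varphi_n$ takes values in $X\setminus S$; moreover $h\circ\varphi_n\to h\circ\varphi$ uniformly on compact subsets of $B$.

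Now I would invoke Hurwitz's theorem in the following form: a locally uniform limit of nowhere-vanishing holomorphic functions on a connected complex manifold is either nowhere vanishing or identically zero. In several variables this reduces to the classical one-variable statement by restricting to complex lines through a prospective zero, combined with the identity theorem. Since $(h\circ\varphi)(z_0)=h(p_0)=0$, the limit $h\circ\varphi$ cannot be nowhere vanishing, so $h\circ\varphi\equiv 0$ on $B$, i.e. $\varphi(B)\subset S$ and hence $B\subset A$. This proves $A$ open and finishes the proof.

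The only point requiring a little care is the interaction of the several-variable Hurwitz step with the possibility that $X$ is singular or non-reduced: one must use the hypothesis "$S$ is a complex hypersurface" through a genuine local defining function $h$ (a non-zero-divisor section of the structure sheaf near $p_0$), so that $h\circ\varphi_n$ is honestly holomorphic on the manifold $B$ and vanishes exactly on $\varphi_n^{-1}(S)$. Once that local picture is in place, $B$ is simply an open subset of $\mathbb C^{\dim Z}$ and the classical Hurwitz theorem applies verbatim. I do not expect any genuine obstacle here — the lemma is a routine Hurwitz-type statement — and the main thing to get right is the bookkeeping of neighbourhoods ensuring that $h\circ\varphi_n$ is defined and nowhere zero for all large $n$.
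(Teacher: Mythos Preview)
Your proof is correct and follows essentially the same route as the paper: both show that $A=\varphi^{-1}(S)$ is closed (trivially) and open (via a local defining function for $S$ and Hurwitz's theorem), then invoke connectedness of $Z$. Your write-up is in fact a bit more careful than the paper's in arranging the neighbourhoods so that $h\circ\varphi_n$ is defined for large $n$, and in noting how the several-variable Hurwitz step reduces to the classical one.
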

\begin{proof}
Suppose  $\varphi(Z)\cap S\ne 0$. Put $\tilde Z:=\{z\in Z: \varphi(z)\in S\}$. Then $\tilde Z\ne \emptyset$. 

 Since $S$  is a closed set in $X$, it is easy to see that $\tilde Z$ is closed in $Z$. Moreover, $\tilde Z$ is open. Indeed, let $z_0\in \tilde Z$. Then there exist an open neighborhood of $\varphi(z_0)$ and a holomorphic function $f\in \mathrm{Hol}(U,\mathbb C)$ such that 
$$ U\cap S=\{w\in U: f(w)=0\} .$$
Since $\{\varphi_n\}$ uniformly converges to $\varphi$ on every compact subset of $Z$, there exists an open neighborhood $W$ of $z_0$ such that $\varphi_n(W)\subset U\setminus S $ for $n\geq l$. Moreover, $\{f\circ \varphi_n\}\subset \mathrm{Hol}(W,\mathbb C)$ converges uniformly to the holomorphic function $f\circ \varphi\in \mathrm{Hol}(W,\mathbb C)$. Since $f\circ\varphi(z_0)=0$, by Hurwitz's theorem, $f\circ \varphi \equiv 0$ on $W$. Therefore, $z_0\in W \subset \tilde Z$ and thus $\tilde Z$ is open.

By the connectivity of $Z$, we obtain that $\tilde Z=Z$. This completes the proof. 
\end{proof}

\begin{lemma}
Let $\Omega$ be a domain in $\mathbb C^m$. Let $S$ be a complex hypersurface in a  compact complex manifold $X$ with a hermitian 
metric $E$  and let $M=X\setminus S$. Let $\mathcal{F}\subset \mathrm{Hol}(\Omega, M)$ such that  $\mathcal{F}$ is not compactly divergent. 
Then, the family $\mathcal{F}$ is normal if and only if for each compact subset $K$ of $\Omega$, there is a constant $c_K>0$ such that
\begin{equation}\label{eq1105}
E(f(z),df(z)(\xi))\leq c_K |\xi| \;\text{ for every }\; z\in K, \; \xi \in \mathbb C^m \setminus\{0\},\; f\in \mathcal{F}.
\end{equation}
\end{lemma}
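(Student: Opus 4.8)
The plan is to prove both implications of the equivalence, with the nontrivial direction being that the derivative bound \eqref{eq1105} implies normality. This is precisely the several-variables Marty-type criterion in the setting where $M = X\setminus S$ sits inside a \emph{compact} manifold $X$, and the point is that compactness of $X$ compensates for the possible incompleteness of $E$ on $M$.

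First I would dispose of the easy direction ($\Rightarrow$). Suppose $\mathcal{F}$ is normal. Fix a compact $K\subset\Omega$ and choose a slightly larger compact set $K'$ with $K\Subset \mathrm{int}(K')\Subset\Omega$. By relative compactness of $\mathcal{F}$ in $\mathrm{Hol}(\Omega,M)$ together with the fact that $\mathrm{Hol}(\Omega,M)\subset\mathrm{Hol}(\Omega,X)$ with $X$ compact, one gets a uniform Cauchy/equicontinuity estimate on $K'$; applying the Cauchy integral formula for the derivatives on polydiscs contained in $K'$ and centered at points of $K$, one obtains a uniform bound on $\|df(z)\|$ for $z\in K$, $f\in\mathcal{F}$, measured in a fixed reference hermitian metric on the compact $X$. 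Since $E$ is continuous and comparable to any such reference metric on compact subsets of $X$ (here I use compactness of $X$, not of $M$), this yields the constant $c_K$ in \eqref{eq1105}. This is essentially the argument already invoked via \cite[Lemma 2.6(i)]{Do} in the proof of Theorem \ref{T1}, now in $\mathbb{C}^m$.

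For the converse ($\Leftarrow$), assume the bound \eqref{eq1105} holds on every compact subset, and that $\mathcal{F}$ is not compactly divergent; I want to conclude $\mathcal{F}$ is normal. The bound gives, for each compact $K\subset\Omega$, a uniform Lipschitz estimate for every $f\in\mathcal F$ with respect to $d_E$ along paths in $K$: $d_E\big(f(z),f(w)\big)\le c_K|z-w|$ for $z,w$ in a convex compact neighborhood. Hence $\mathcal{F}$ is equicontinuous as a family of maps into $(X,d_{E'})$ for a reference metric $E'$ on the compact manifold $X$ (again using that $E$ and $E'$ are locally comparable on $X$). By the Arzelà–Ascoli theorem and compactness of $X$, every sequence in $\mathcal F$ has a subsequence converging uniformly on compact subsets of $\Omega$ to some $\varphi\in\mathrm{Hol}(\Omega,X)$. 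Now invoke Lemma \ref{lem4}: the limit $\varphi$ satisfies either $\varphi(\Omega)\subset M$ or $\varphi(\Omega)\subset S$. In the first case the subsequence converges in $\mathrm{Hol}(\Omega,M)$, as desired. The second case, $\varphi(\Omega)\subset S$, is exactly where the hypothesis ``$\mathcal F$ is not compactly divergent'' must be used to derive a contradiction: I would argue that if this alternative occurred for every convergent subsequence extracted from a given sequence $\{f_j\}$, then $\{f_j\}$ would be compactly divergent in $M$ (for any compact $L\subset M$, $L$ has positive $d_{E'}$-distance to the closed set $S$, and uniform convergence to a map with image in $S$ forces $f_j(K)\cap L=\emptyset$ eventually on each compact $K$, and an exhaustion/diagonal argument upgrades this to all of $\Omega$), contradicting the assumption.

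The main obstacle is this last point: ruling out the ``limit lands in $S$'' alternative cleanly. One must be careful that non-compact-divergence of $\mathcal F$ is a statement about the whole family while the dichotomy from Lemma \ref{lem4} is about individual limits, so the contradiction requires extracting, from an arbitrary sequence, a subsequence \emph{all} of whose further subsequential limits land in $S$ — which is automatic once one fixes one convergent subsequence with limit in $S$ — and then showing that sequence is compactly divergent. A secondary technical care point is the systematic use of compactness of $X$ (as opposed to $M$) to pass between $E$ and a fixed smooth reference metric; this is what makes the statement genuinely stronger than Theorem \ref{T1}, where completeness of $E$ was assumed instead.
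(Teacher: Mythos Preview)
Your proposal is correct and follows essentially the same approach as the paper: for ($\Leftarrow$) both of you use the derivative bound to obtain equicontinuity, apply Ascoli with values in the compact space $X$, and then invoke Lemma~\ref{lem4} together with the non-compact-divergence hypothesis to exclude limits landing in $S$ (the paper states this last step in one line, while you spell out why such a limit would make the subsequence compactly divergent). The only minor difference is in ($\Rightarrow$), where the paper argues by contradiction---extracting $z_k\to z_0$, unit vectors $\xi_k\to\xi_0$, and $f_k\to f_0$ with $E(f_k(z_k),df_k(z_k)(\xi_k))\to\infty$, which is absurd---whereas you propose a direct Cauchy-estimate argument via a slightly larger compact $K'$; both are routine.
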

\begin{proof}

\noindent
{\bf ($\Rightarrow$).} We will show that (\ref{eq1105}) holds. Suppose that, on the contrary, there exist a compact subset $K\subset \Omega$ and a sequences $\{z_k\}\subset K$, $\{\xi_k\}\subset \mathbb C^m\setminus \{0\}$ with $|\xi_k|=1$ for all $k\geq 1$, $\{f_k\}\subset \mathcal{F}$ such that 
$$
E(f(z_k),df_k(z_k)(\xi_k))\to \infty\;\text{as}\; k\to \infty.
$$ 
Without loss of generality we may assume that $z_k\to z_0\in K$ and $\xi_k\to \xi_0$ as $k\to \infty$. Moreover, since $\mathcal{F}$ is normal we can assume that $\{f_k\}$ converges uniformly on $K$ to $f_0\in \mathrm{Hol}(\Omega, M)$ as $k\to \infty$. Therefore we get
$$
E(f_0(z_0),df_0(z_0)(\xi_0))=\lim_{k\to\infty} E(f(z_k),df_k(z_k)(\xi_k))=\infty.
$$
This is a contradiction.

\noindent
{\bf ($\Leftarrow$).}
Suppose that $(\ref{eq1105})$ holds. We will show that $\mathcal{F}$ is normal. Indeed, given $x_0\in \Omega$. Take $r>0$ be such that
$$
K:=\overline{B}(x_0,r)\subset \Omega.
$$
Since $K$ is compact, by the hypothesis, this is a constant $c_K>0$ such that 
$$
E(f(z),df(z)(\xi))\leq c_K |\xi| \;\text{ for every }\; z\in K, \; \xi \in \mathbb C^m \setminus\{0\},\; f\in \mathcal{F}.
$$  
For every $x\in K$ consider the curve $\gamma: [0,1] \to \Omega$ joining $x_0$ and $x$ given by $\gamma(t)=t(x-x_0)+x_0$. Then
\begin{equation*}
\begin{split}
 dist(f(x),f(x_0))&\leq \int_{0}^1 E(f\circ \gamma(t),d(f(\circ \gamma)(t))dt\\
&\leq c_K \int_{0}^1 | \gamma'(t)|dt=C_K .\|x-x_0\|\; \text{for all }\; f\in \mathcal{F}.
\end{split}
\end{equation*}
This implies that the family $\mathcal{F}$ is equicontinuous. Therefore, by Ascoli's theorem $\mathcal{F}$ is normal in $\mathrm{Hol}(\Omega, X)$. Now it suffices to show that if a sequence $\{f_k\}\subset \mathcal{F}$ converges to a holomorphic map $f\in \mathrm{Hol}(\Omega,X)$. Indeed, it follows from Lemma \ref{lem4} that either $f(\Omega)\subset S$ or $f(\Omega)\subset X\setminus S=M$. Since $\mathcal{F}$ is not compactly divergent, the case $f(\Omega)\subset S$ does not occur. Thus the proof is complete.
\end{proof}

\noindent
{\it Proof of Theorem \ref{T3}}

Since $X$ is compact, by Zalcman's lemma (cf. see \cite[Lemme 2.1, p.430]{Ber}) we have that the family $\mathcal{F}$ is not normal if and only if there exist sequences $\{p_j\}\subset \Omega$ with $p_j\to p_0\in\Omega$ as $j\to\infty$, $\{f_j\} \subset \mathcal{F}, \{\rho_j\}\subset \mathbb R$ with $\rho_j>0$ and $\rho_j \to 0^+$ as $j\to \infty$ such that 
$$
g_j(\xi):=f_j(p_j+\rho_j\xi), \xi\in \mathbb  C
$$
converges uniformly on every compact subset of $\mathbb C$ to a non-constant $E$-Brody curve $g: \mathbb C\to X$. 

Because $S$ is a hypersurface in $X$, by Lemma \ref{lem4} we conclude that either $g(\mathbb C)\subset X\setminus S=M$ or $g(\mathbb C)\subset S$. Thus one of the following two assertions holds
\begin{itemize}
\item[(i)] The sequence $\{g_j\}$ is compactly divergent on $\mathbb C$;
\item[(ii)] The sequence $\{g_j\}$ converges uniformly on compact subsets of $\mathbb C$  to a non-constant $E$-Brody curve $g: \mathbb C\to M$. 
 \end{itemize}
\hfill $\boxed{ }$

\begin{corollary}\label{Co11}
Let $\{f_n: \Delta\to \mathbb C^*\}$ be a non-normal sequence of holomorphic functions. If $\{f_n\}$ is not compactly divergent, then there exist a subsequence $\{f_{n_j}\}\subset\{f_n\}$ and sequences $\{p_j\}\subset \Delta$ with $p_j\to p_0\in \Delta$ as $j\to \infty$ and $\{\rho_j\}\subset \mathbb R^+$ with $\rho_j\to 0^+$ as $j\to \infty$ such that the following sequence
$$
g_j(\xi):=f_{n_j}(p_j+\rho_j\xi)
$$
converges uniformly on every compact subset of $\mathbb C$ to $E(\xi)=\exp(A_0 \xi+B_0)$, where $A_0\in \mathbb C^* $ and $B_0\in \mathbb C$.
\end{corollary}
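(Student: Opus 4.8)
The plan is to realize $\mathbb{C}^*$ as the complement of a hypersurface in a compact complex manifold, to extract a Brody curve by applying Theorem \ref{T1} to that compact target, and then to use the classical structure of finite-order entire curves in $\mathbb{C}^*$. Concretely, set $X=\mathbb{P}^1(\mathbb{C})$, $S=\{0,\infty\}$ --- a closed analytic subset of dimension $0$, hence a complex hypersurface of $X$ --- and $M=X\setminus S=\mathbb{C}^*$, with the Fubini--Study metric $ds^2_{FS}$; note that $X$ is compact, so $ds^2_{FS}$ is complete, and $\mathcal{F}:=\{f_n\}\subset\mathrm{Hol}(\Delta,\mathbb{C}^*)$.

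First I would upgrade the hypotheses to non-normality with target $\mathbb{P}^1(\mathbb{C})$. Passing to a subsequence, I may assume that $\{f_n\}$ has no sub-subsequence converging in $\mathrm{Hol}(\Delta,\mathbb{C}^*)$, while still having no compactly divergent sub-subsequence (this last property being inherited by subsequences). I claim such a sequence cannot be normal as a family of maps $\Delta\to\mathbb{P}^1(\mathbb{C})$: otherwise some sub-subsequence converges locally uniformly to $f\in\mathrm{Hol}(\Delta,\mathbb{P}^1(\mathbb{C}))$, and Lemma \ref{lem4} forces either $f(\Delta)\subset\mathbb{C}^*$, contradicting the choice of subsequence, or $f(\Delta)\subset S$, whence $f\equiv 0$ or $f\equiv\infty$ by connectedness and the sub-subsequence is compactly divergent in $\mathbb{C}^*$, again a contradiction.

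Next I would apply Theorem \ref{T1} to this subsequence with $M=\mathbb{P}^1(\mathbb{C})$ and $E=ds^2_{FS}$: it yields $p_j\to p_0\in\Delta$, $\rho_j\to 0^+$ and a subsequence $\{f_{n_j}\}$ for which $g_j(\xi):=f_{n_j}(p_j+\rho_j\xi)$ is either compactly divergent on $\mathbb{C}$ or converges locally uniformly to a non-constant $ds^2_{FS}$-Brody curve $g\colon\mathbb{C}\to\mathbb{P}^1(\mathbb{C})$. The first alternative cannot occur because the target $\mathbb{P}^1(\mathbb{C})$ is itself compact --- taking $L=\mathbb{P}^1(\mathbb{C})$ in the definition of compact divergence would force $g_j(K)=\emptyset$ on any compact $K$ on which $g_j$ is defined. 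Hence the second alternative holds; since each $g_j$ maps into $\mathbb{C}^*$, Lemma \ref{lem4} gives $g(\mathbb{C})\subset\mathbb{C}^*$ or $g(\mathbb{C})\subset S$, and the latter is impossible for a non-constant holomorphic curve because $S$ is discrete. Thus $g\colon\mathbb{C}\to\mathbb{C}^*$ is a non-constant $ds^2_{FS}$-Brody curve and $g_j\to g$ locally uniformly on $\mathbb{C}$.

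Finally I would identify $g$. Since $g$ is holomorphic and zero-free on the simply connected $\mathbb{C}$, we may write $g=\exp\circ\phi$ with $\phi$ entire. Boundedness of $\|g'\|_{FS}$ gives $T_r(g)=O(r)$ (see \cite{C-H}), so $g$ has order at most one; by the classical fact that $e^{\phi}$ has finite order exactly when $\phi$ is a polynomial, and then of order $\deg\phi$, it follows that $\phi(\xi)=A_0\xi+B_0$ with $A_0,B_0\in\mathbb{C}$, and $A_0\ne 0$ since $g$ is non-constant. Therefore $g_j\to E$ locally uniformly on $\mathbb{C}$, where $E(\xi)=\exp(A_0\xi+B_0)$ with $A_0\in\mathbb{C}^*$ and $B_0\in\mathbb{C}$, which is the assertion. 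The step I expect to be the real content is this last one --- the rigidity of $ds^2_{FS}$-Brody curves in $\mathbb{C}^*$, resting on the chain ``bounded spherical derivative $\Rightarrow T_r(g)=O(r)\Rightarrow$ finite order $\Rightarrow$ $\phi$ affine-linear'' --- together with the bookkeeping in the second paragraph, where the hypothesis that $\{f_n\}$ is not compactly divergent is genuinely needed (via Lemma \ref{lem4} over $\Delta$) to rule out a degenerate limit.
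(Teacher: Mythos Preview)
Your proof is correct and follows essentially the same route as the paper: realize $\mathbb{C}^*=\mathbb{P}^1(\mathbb{C})\setminus\{0,\infty\}$, extract a non-constant $ds^2_{FS}$-Brody curve landing in $\mathbb{C}^*$ via Lemma~\ref{lem4}, and identify it as $\exp(A_0\xi+B_0)$ from $T_r(g)=O(r)$. The only difference is cosmetic: the paper invokes Theorem~\ref{T3} directly (whose proof is precisely Zalcman's lemma on the compact $X$ plus Lemma~\ref{lem4}), whereas you unpack that package by hand---first arguing non-normality in $\mathbb{P}^1(\mathbb{C})$, then applying Theorem~\ref{T1}, ruling out alternative~(i) by compactness of the target, and finally applying Lemma~\ref{lem4} to force $g(\mathbb{C})\subset\mathbb{C}^*$---so your argument is slightly more self-contained but not materially different.
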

\begin{proof}
Let $\{f_n: \Delta\to \mathbb C^*\}$ be a  non-normal sequence of holomorphic functions which is not compactly divergent. Since $\mathbb C^*=\mathbb P^1(\mathbb C)\setminus \{0,\infty\}$, by Theorem \ref{T3} there exist a subsequence $\{f_{n_j}\}\subset\{f_n\}$ and sequences $\{p_j\}\subset \Delta$ with $p_j\to p_0\in \Delta$ as $j\to \infty$ and $\{\rho_j\}\subset \mathbb R^+$ with $\rho_j\to 0^+$ as $j\to \infty$ such that the following sequence
$$
g_j(\xi):=f_{n_j}(p_j+\rho_j\xi)
$$
converges uniformly on every compact subset of $\mathbb C$ to $\rho$-Brody curve $g:\mathbb C \to \mathbb C^*$, where $\rho$ is the spherical metric on $\mathbb P^1(\mathbb C)$. Moreover because the spherical derivative of $g$ is bounded, $g(\xi)=E(\xi)=\exp(A_0 \xi+B_0)$ for all $\xi \in\mathbb C$, where $A_0\in \mathbb C^* $ and $B_0\in \mathbb C$.
\end{proof}

F. Berteloot and J. Duval proved the following theorem (see \cite[Lemme 2.4, p. 434]{Ber}).
\begin{theorem}[F. Berteloot - J. Duval]\label{Th5} Let $f: \mathbb C\to \mathbb P^1(\mathbb C)\setminus \{0,\infty\}$ be a non-constant holomorphic function. Then there exist sequences $\{A_k\}\subset \mathbb C$ and $\{B_k\}\subset \mathbb C$ such that $f(A_k z+B_k)$ converges uniformly on every compact subsets of $\mathbb C$ to $E(z)=\exp(A_0 z+B_0)$, where $A_0\in \mathbb C^* $ and $B_0\in \mathbb C$.
\end{theorem}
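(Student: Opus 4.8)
\emph{Plan.} The statement is the $\mathbb C^*$–version of the Brody--Zalcman rescaling lemma, and the plan is to deduce it directly from Corollary \ref{Co11} (equivalently, from Theorem \ref{T3} applied to $M=\mathbb P^1(\mathbb C)\setminus\{0,\infty\}$), which already packages the two nontrivial ingredients needed here: the Zalcman rescaling, and the fact that a Brody curve into $\mathbb C^*$ with bounded spherical derivative must be of the form $\exp(A_0\xi+B_0)$.

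First I would fix a point $z_0\in\mathbb C$ with $f'(z_0)\neq 0$; such a point exists because $f$ is non-constant. Define $f_n\in\mathrm{Hol}(\Delta,\mathbb C^*)$ by $f_n(z):=f(z_0+nz)$. Since $\|f_n'(0)\|=n\,\|f'(z_0)\|\to\infty$ as $n\to\infty$, the sequence $\{f_n\}$ is not normal (by Marty's criterion, or simply because a normal sequence has locally bounded derivatives on $\Delta$). On the other hand $f_n(0)=f(z_0)$ is a point of $\mathbb C^*$ independent of $n$, so $\{f_n\}$ is not compactly divergent.

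Now I would apply Corollary \ref{Co11} to $\{f_n\}$: there are a subsequence $\{f_{n_j}\}$, points $p_j\to p_0\in\Delta$, and radii $\rho_j\to 0^+$ such that $g_j(\xi):=f_{n_j}(p_j+\rho_j\xi)$ converges, uniformly on compact subsets of $\mathbb C$, to $\exp(A_0\xi+B_0)$ with $A_0\in\mathbb C^*$ and $B_0\in\mathbb C$. Unwinding the definition, $g_j(\xi)=f\big((z_0+n_jp_j)+(n_j\rho_j)\xi\big)$, so putting $A_k:=n_{j_k}\rho_{j_k}$ and $B_k:=z_0+n_{j_k}p_{j_k}$ produces sequences in $\mathbb C$ for which $f(A_kz+B_k)\to\exp(A_0z+B_0)$ locally uniformly, which is exactly the assertion.

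There is essentially no obstacle left once Corollary \ref{Co11} is available; the only points to verify are the non-normality and the non-compact-divergence of $\{f_n\}$, both elementary. If one prefers a route independent of Corollary \ref{Co11}, one would Brody-rescale $f$ directly to obtain a non-constant Brody curve $g:\mathbb C\to\mathbb P^1(\mathbb C)$ arising as a locally uniform limit $f(A_kz+B_k)\to g$; then Lemma \ref{lem4} applied with $S=\{0,\infty\}$ forces $g(\mathbb C)\subset\mathbb C^*$, since a non-constant $g$ cannot have image in the two-point set $\{0,\infty\}$; and finally, writing $g=\exp h$ with $h$ entire and using $T_r(g)=O(r)$ (from the boundedness of $\|g'\|_{FS}$) shows that $h$ has order at most one and minimal type, hence is an affine polynomial $h(\xi)=A_0\xi+B_0$ with $A_0\neq 0$. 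On this alternative route the only technical point of substance is the growth estimate $T_r(g)=O(r)$ together with the identification of $h$, which is precisely what the proof of Corollary \ref{Co11} already carries out.
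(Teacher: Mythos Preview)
Your main argument is correct and essentially identical to the paper's proof: both construct the non-normal, non-compactly-divergent family $f_n(z)=f(z_0+nz)$ (the paper normalizes $z_0=0$), invoke Corollary~\ref{Co11}, and unwind the rescaling to obtain the affine reparametrizations $A_k,B_k$. The alternative route you sketch is also sound in outline, though the phrase ``order at most one and minimal type'' is not quite the right justification; the clean way is that $T_r(e^h)=O(r)$ together with $T_r(e^{-h})=O(r)$ bounds $|\mathrm{Re}\,h|$ linearly on circles, whence $h$ is affine by Borel--Carath\'eodory.
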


The proof of the above theorem is given in \cite{Ber}. The following is a different proof which is an application of Corollary \ref{Co11}.

\begin{proof}[Proof of Theorem \ref{Th5}]
Suppose that $f: \mathbb C\to \mathbb P^1(\mathbb C)\setminus \{0,\infty\}=\mathbb C^*$ is a non-constant holomorphic function. Then there is a point $a_0\in\mathbb C$ such that $f'(a_0)\ne 0$. Without loss of generality $ a_0=0$. For each $k\in \mathbb N^*$, define $f_k:\Delta\to \mathbb C^*$ by $f_k(z)=f(kz)$ for all $z\in \Delta$. Since $f_k(0)=f(0)\in \mathbb C^*$ and ${f_k}'(0)=k f'(0)\to \infty$ as $k\to \infty$, $\{f_k\}$ is not normal and is not compactly divergent. Thus, by Corollary \ref{Co11} there exist sequences $\{k_n\}\subset\mathbb N$, $\{p_n\}\subset \Delta$ with $p_n\to p_0\in \Delta$ as $n\to \infty$, and $\{\rho_n\}\subset \mathbb R^+$ with $\rho_n\to 0^+$ as $n\to \infty$ such that the following sequence
$$
g_n(\xi):=f_{k_n}(p_n+\rho_n\xi)=f(k_np_n+k_n\rho_n \xi)
$$
converges uniformly on every compact subset of $\mathbb C$ to $E(\xi)=\exp(A_0 \xi+B_0)$, where $A_0\in \mathbb C^* $ and 
$B_0\in \mathbb C$. So, the proof is complete.
\end{proof}

\begin{example}
Let $\{k_j\}\subset \mathbb Z$ be such that $e^{ik_j}\to 1$ as $j\to \infty$. Then the sequence $\{g_j\}$ given by
$$
g_j(z):=\exp(\exp(i\frac{\pi}{2}+\frac{z}{k_j}+\ln k_j))
$$
converges uniformly on every compact subset of $\mathbb C$ to the holomorphic function $g:\mathbb C\to \mathbb C^*$ given by $g(z)=e^{iz}$ for all $z\in \mathbb C$. Indeed,
we have 
\begin{equation*}
\begin{split}
g_j(z)&=\exp(\exp(i\frac{\pi}{2}+\frac{z}{k_j}+\ln k_j))\\
&= \exp(i k_j \exp(\frac{z}{k_j}))\\
&=\exp(i k_j (1+z/k_j+O(1/k_j^2)))\\
&=e^{ik_j} e^{iz +O(1/k_j)}.
\end{split}
\end{equation*}
We note that $\{O(1/k_j)\}$ converges uniformly on every compact subset of $\mathbb C$ to $0$. Thus $\{g_j\}$ converges uniformly on every compact subset of $\mathbb C$ to the holomprhic function $g:\mathbb C\to \mathbb C^*$ given by $g(z)=e^{iz}$ for all $z\in \mathbb C$.
\end{example}

\bibliographystyle{amsplain}

\end{document}